\titlespacing*{\paragraph}{0pt}{8pt}{\baselineskip}
\newcommand{\commentout}[1]{}
\newcommand {\al} {\alpha}
\newcommand {\eps}  {\varepsilon}
\newcommand {\vp} {\varphi}
\newcommand{\pd}{\partial}
\newcommand{\dt}{\pd_{t}}
\newcommand {\x} { {\bm x} }
\newcommand{\V}{{\bm V}}
\newcommand{\dx}{\nabla}
\newcommand{\dxi}{\,{\rm d}\x}
\newcommand{\intx}{\int_{\Omega}}
\newcommand{\intt}{\int_0^T}
\newcommand{\dti}{{\rm d}t}
\newcommand{\red}{\textcolor{red}}
\newcommand{\REV}[1]{\textcolor{black}{#1}}
\newcommand{\beq}{\begin{equation}}
\newcommand{\eeq}{\end{equation}}
\newtheorem{theorem}{Theorem}
\newtheorem{definition}[theorem]{Definition}
\newtheorem{proposition}[theorem]{Proposition}
\newcommand{\qed}{{ \hfill
                       {\unskip\kern 6pt\penalty 500 \raise -2pt\hbox{\vrule\vbox to 6pt{\hrule width 6pt
                       \vfill\hrule}\vrule} \par}   }}
\title{\REV{Existence, r}egularity and stability in a strongly degenerate nonlinear diffusion and haptotaxis model of cancer invasion}
\author{
Beno\^ \i t Perthame\thanks{Sorbonne Universit{\'e}, CNRS, Universit\'{e} de Paris, Inria, Laboratoire Jacques-Louis Lions UMR7598, F-75005 Paris., France 
Email : Benoit.Perthame@sorbonne-universite.fr}
\and Chiara Villa\thanks{Sorbonne Universit{\'e}, CNRS, Universit\'{e} de Paris, Inria, Laboratoire Jacques-Louis Lions UMR7598, F-75005 Paris, France. 
Email : chiara.villa.1@sorbonne-universite.fr}{$^,$\thanks{Université Paris Cité, CNRS, MAP5, Paris, F-75006, France}}
}
\date{\today}
\begin{document}
\maketitle
\pagestyle{plain}
\pagenumbering{arabic}

\begin{abstract} 
We consider a mathematical model of cancer cell invasion of the extracellular matrix (ECM), comprising a strongly degenerate parabolic partial differential equation for the cell volume fraction, coupled with an ordinary differential equation for the ECM volume fraction \REV{($0\leq\vp\leq1$)}. The model captures the intricate link between the dynamics of invading cancer cells and the surrounding ECM. First, migrating cells undergo haptotaxis, i.e., movement up ECM density gradients. Secondly, cancer cells degrade the ECM fibers by means of membrane-bound proteases. Finally, their migration speed is modulated by the ECM pore sizes, resulting in the saturation or even interruption of cell motility both at high and low ECM densities. The inclusion of the physical limits of cell migration results in two regimes of degeneracy \REV{(at $\vp=0$ and $\vp=1$)} \REV{impacting simultaneously} the nonlinear diffusion and haptotaxis terms. 

We devise specific estimates that are compatible with the strong degeneracy of the equation, focusing on the vacuum present at low ECM densities. Based on these regularity results, and associated local compactness, we prove stability of weak solutions with respect to perturbations on bounded initial data\REV{, and global existence of weak solutions in the limit of an approximate non-degenerate problem.}

\end{abstract} 

\noindent{\makebox[1in]\hrulefill}\newline 
2020 \textit{Mathematics Subject Classification.} 35K65, 35K55, 92C50
\newline\textit{Keywords and phrases.} Degenerate parabolic PDEs; Compactness; Stability; Mathematical biology; Cancer invasion.
%
\section{Introduction}
\label{sec:intro}

We begin with a short summary of the biological phenomena motivating the form of the equation and of the analytical efforts made for more approachable versions of the model. 

\paragraph{Biological context.} 
Migrating cells typically move following environmental clues. Notably, they may have the ability to sense environmental gradients resulting in directed movement, for instance, down density-dependent pressure gradients or up gradients of chemical concentrations (e.g., oxygen, growth factors, etc), phenomenon known as chemotaxis. 
Empirical evidence also indicates the tendency of migrating cells to move by haptotaxis~\cite{aznavoorian1990signal}, that is up density gradients of the extracellular matrix (ECM) -- i.e., the network consisting of extracellular macromolecules and elastic fibers providing cells with structural support. Haptotaxis-driven motion is particularly interesting in the context of cancer invasion, as invading cells may themselves create ECM gradients~\cite{perumpanani1999extracellular} by degrading the ECM fibers by means of diffusive or membrane-bound proteases~\cite{friedl2003tumour}.
\REV{
In particular, membrane-bound matrix metalloproteinases (MMPs) such as MT1-MMP, which are key to the activation of diffusive MMPs~\cite{almeida2025mathematical}, have been recognised to play a prominent role in the metastatic process, with experimental findings pointing towards a direct essential role of MT-MMPs in the invasion process that is independent of the action of soluble MMPs~\cite{hotary2006cancer,poincloux2009matrix}. 
} 

Independently of the environmental clues directing cell migration, the density of the ECM plays an important role in modulating the speed at which cells can migrate through 3D tissues~\cite{wolf2013physical}. On one hand, invading cells rely on the presence of ECM fibers \red{to} form focal adhesions and exert traction forces boosting them forward, and thus a lower ECM density may hinder this process. On the other hand, a denser ECM is characterised by smaller pore sizes that may slow down cell migration, by friction, or impede it altogether if the pore size is smaller than the diameter of the cell nucleus. Indeed, the proteolytic degradation of ECM fibers by cancer cells may help them achieve the optimal environmental set up for fast invasion~\cite{friedl2003tumour}.

\paragraph{State of the art.} 

A wide range of mathematical models have been proposed in the study of cancer invasion, see~\cite{sfakianakis2020mathematical} and references therein, many of which comprise a system of partial differential equations (PDEs) for the cancer cell and ECM densities, building on the framework first proposed in~\cite{anderson2000mathematical,perumpanani1999extracellular}. PDE models including haptotaxis have sparked the interest of the mathematical community, as the resulting system is not of the standard parabolic type~\cite{tao2007global} due to the nonlinear negative feedback induced by ECM degradation, and their analytical study can be more demanding than that of the classical Keller-Segel model for chemotaxis due to the non-diffusive nature of the ECM~\cite{bellomo2015toward}.

In the case of diffusive proteases, many results on global existence, boundedness and large time behaviour of the solution to the model comprising linear (or nonlinear) diffusion and haptotaxis have been obtained, following the initial results in~\cite{tao2007global,walker2007global}. These include model extensions featuring chemotaxis and ECM remodelling, building on the framework proposed by Chaplain and Lolas~\cite{chaplain2006mathematical}, as well as parabolic-elliptic variants under quasi-stationary assumptions on protease dynamics, and two-species models. We refer the interested reader to~\cite{bellomo2015toward,dai2022global,wang2020review} for an exhaustive summary of rigorous analytical results obtained up to 2022, noting that the relevant literature has since expanded~\cite{dai2023boundedness,jin2024critical,jin2024roles}. 

On the contrary, very few works have investigated the well-posedness of models solely considering membrane-bound proteases as in~\cite{perumpanani1999extracellular}, which lacks the additional regularity introduced by the protease diffusivity~\cite{marciniak2010boundedness}, mainly by Zhigun and coworkers~\cite{zhigun2016global2,zhigun2018strongly,zhigun2016global}. 
In these papers, the authors prove global existence of weak solutions in haptotaxis-driven cancer invasion models, with carefully defined diffusion and haptotactic coefficients to avoid blow up of solutions in finite time typical of Chaplain-Lolas inspired models~\cite{shangerganesh2019finite,tao2010density}. Interestingly, they focus on parabolic PDEs presenting a degeneracy of the diffusion in the absence of ECM density, although this degeneracy does not impact haptotaxis. 

The modulation of the speed of cell migration by the ECM pore size -- or, under isotropic assumptions, the ECM density or volume fraction -- has been well characterised by Preziosi and coworkers
~\cite{arduino2015multiphase,giverso2018nucleus,scianna2012cellular}. The resulting function modelling the ECM density-dependent saturation of cell migration speed may result in the degeneracy of the overall velocity~\cite{loy2020modelling}, 
both at low and high ECM density. The well-posedness of PDE models implementing such a degeneracy resulting from the physical limits of cell migration is yet to be addressed and motivates the present work. 
\REV{Naturally, the scenario of cancer invasion prominently mediated by membrane-bound MMPs, rather than diffusive ones, represents a more critical case with greater analytical challenges in dealing with the strong degeneracy of the PDE.}

\paragraph{Paper content.}
We present, in Section~\ref{sec:model}, a strongly degenerate nonlinear diffusion and haptotaxis model of cancer invasion, focusing on membrane-bound proteases, that accounts for the ECM density-dependent saturation of the overall cell migration speed. 
\REV{The degeneracy arises both at $\vp=0$ and $\vp=1$, being $\vp$ the volume fraction occupied by the ECM, and affects both the diffusion and haptotactic terms.}
In Section~\ref{sec:estimates}, we devise specific estimates compatible with the degeneracy of the equation. \REV{In this section, we focus on the degeneracy at 0 under a stricter bound on the initial ECM volume fraction, that allows us to avoid the degeneracy at 1, which is introduced for technical reasons to obtain stronger gradient controls. } 
Based on the regularity provided by these estimates, we obtain some local compactness results in Section~\ref{sec:compactness} and prove stability of weak solutions with respect to perturbations of bounded initial data in Section~\ref{sec:stability}. 
\REV{Existence of weak solutions is addressed in Section~\ref{sec:existence}, where solutions to the degenerate problem are obtained in the limit of an approximate non-degenerate problem -- see Figure~\ref{f1} for a schematic illustration.} 
We conclude with a brief discussion in Section~\ref{sec:discuss}\REV{, in which the biological rationale behind the stricter bound on the initial ECM volume fraction is also addressed.}

\section{The model}
\label{sec:model}

Consider a population of cancer cells invading the surrounding tissue. For the purpose of our work, we here only take into account the role played by the ECM in this process, i.e. we ignore other cell populations and abiotic factors for simplicity. Cells are assumed to proliferate in the limits imposed by the availability of free space, and die due to compression in the lack of it.  Spatial movement of cells is assumed to be driven by haptotaxis and cell density-induced pressure gradients, i.e. cells move up ECM density gradients and down cell density gradients, within the physical limits imposed by ECM pore sizes.  Finally, cancer cells are assumed to have the ability to degrade the ECM and increase the availability of free space to move and proliferate in.
%

Let $\psi(t,\x)\geq0$ and $\vp(t,\x)\geq0$ respectively denote the volume fraction of a population of cancer cells and that of the ECM components at time $t\in\mathbb{R}_{\geq0}$ and spatial position $\x\in\Omega\subseteq\mathbb{R}^d$ ($d=1,2,3$). Then $\psi(t,\x)$ and $\vp(t,\x)$ satisfy the following system of PDEs for $t>0$ and $\x\in\Omega$
\begin{equation}
\label{eq}
\begin{cases}
\dt \psi +  \dx \cdot \left[\, \psi\,  {\V}(\psi,\vp)\,\right] = \psi  R(\psi,\vp) \,,\\[5pt]
\dt \vp = -\gamma \vp \psi \,.
\end{cases}
\end{equation}
The second term on the left-hand-side of Eq.~\eqref{eq}$_1$ models cell motion with velocity ${\V}$ given by
\begin{equation}\label{def:V}
    {\V}(\psi,\vp):= S(\vp)\,\big(\alpha \dx \vp\,-D \dx \psi \big)\,,
\end{equation}
with haptotactic coefficient $\alpha>0$, nonlinear diffusion coefficient $D>0$ and where $S(\vp)$ models the modulation of the motility of cancer cells according to healthy tissue permeability, and is therefore a function of the local ECM volume fraction.  
Building on the work presented in~\cite{giverso2018nucleus,loy2020modelling}, we take
\begin{equation}\label{def:S}
    S(\vp):= \vp(1-\vp)\,,
\end{equation}
capturing the saturation of cell motility when the ECM volume fraction is too low (i.e.  cells lack proper structural support of ECM fibers to move quickly) or too high (i.e. cells are slowed down by the physical constraints imposed by small ECM pores).  The term on the right-hand-side of Eq.~\eqref{eq}$_1$ models cell proliferation and death at a net rate $R(\psi,\vp)$ which, following classical volume exclusion assumptions~\cite{anderson2000mathematical,chaplain2006mathematical,perumpanani1999extracellular}, we take to be an increasing function of the available space, i.e.
\begin{equation}\label{def:R}
R(\psi,\vp):=  R_0 \,(1-\vp-\psi)\,,
\end{equation}
where $R_0>0$ is the maximum net proliferation rate. Finally, Eq.~\eqref{eq}$_2$ models ECM degradation by cancer cells at a rate $\gamma>0$.

\begin{figure}
    \centering    \includegraphics[width=\linewidth]{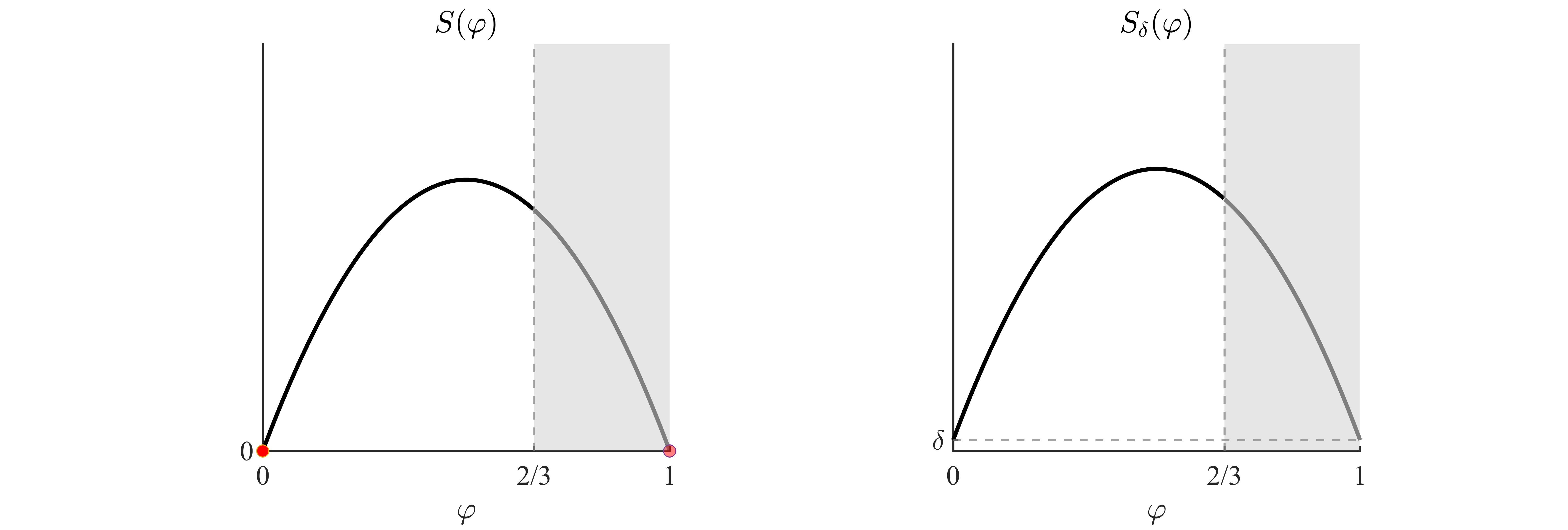}
    \caption{\REV{Schematic illustration of the modulation of the velocity according to the ECM volume fraction $\vp$. 
    Left: behaviour of the function $S(\vp)$ defined in~\eqref{def:S} marking the strong degeneracy of the velocity $\bm V$ at $\vp=0$ and $\vp=1$ (in red) along with the region excluded under assumption~\eqref{as:2} (in gray). Right: corresponding illustration of the perturbed function $S_\delta(\vp)=\delta+S(\vp)>0$ used in the perturbed non-degenerate problem considered to obtain existence in the limit $\delta\to0$ in Section~\ref{sec:existence}.}}
    \label{f1}
\end{figure}

System~\eqref{eq} is complemented with the initial conditions
\begin{equation}\label{ic}
\psi(0,\x)=\psi^0(\x)\qquad \text{and} \qquad \vp(0,\x)=\vp^0(\x)\,,
\end{equation}
which are assumed to satisfy the following properties:
\begin{align} \label{as:1}
0\leq \vp^0(\x) \leq 1, \quad \intx \vp^0 (\x) \dxi  < \infty, \quad 0 \leq \psi^0(\x) , \quad  \intx \left(1+\frac{|\x|^2}{2} + \psi^0(\x)  \right) \psi^0 (\x) \dxi < \infty. 
\end{align}
After initial regularity estimates in Section~\ref{sec:estimates:1}, we proceed with the additional stronger assumptions
\begin{equation} \label{as:2}
0\leq \vp^0(\x) \leq \frac 2 3, \qquad |\nabla \sqrt \vp^0 |\in L^2(\Omega). 
\end{equation}
\REV{The major difficulty to analyse this system is the degeneracy when $\varphi(t,\x)=0$. The degeneracy for  $\vp(t,\x)=1$ can be avoided -- see also gray area in Figure~\ref{f1} -- because $\vp$ is decreasing in time and the assumption~\eqref{as:2}, which is fundamental in our analysis, is propagated with time.}

In the following we take $\Omega\equiv\mathbb{R}^d$, but analogous results can be obtained for $\Omega\subset\mathbb{R}^d$ complementing~\eqref{eq}$_1$ with no-flux boundary conditions on $\partial\Omega$.


\section{Regularity estimates \REV{and inclusions}}
\label{sec:estimates} 

\subsection{Regularity results for {\boldmath $\vp^0\leq1$}}\label{sec:estimates:1}

\begin{proposition}[Elementary a priori \REV{bounds}]\label{prop1}
Let $\Omega\equiv\mathbb{R}^d$, and let \REV{$(\psi,\varphi)$, with} $\psi\geq0$ and $\vp\geq0$\REV{, be a weak solution built in Section~\ref{sec:existence}} of~\eqref{eq} and definitions~\eqref{def:V}-\eqref{def:R}, complemented with initial conditions $\psi(0,\x)=\psi^0(\x)$ and $\vp(0,\x)=\vp^0(\x)$ satisfying assumptions~\eqref{as:1}.  Then for all $t\in[0,\infty)$ and $\x\in\Omega$ we have that 
\begin{equation} \label{est:phiLinf}
\vp(t,\x) \leq \vp^0(\x).
\end{equation}
In addition, \REV{that the following quantities are bounded in the following spaces} for all $T>0$: 
\begin{align}
    \psi\in &\; L^\infty \left((0,T); L^1(\Omega)\cap L^2(\Omega)\right) , \label{est:psiL1} \\[5pt]
     \frac{|\x|^2}{2} \psi \in &\; L^\infty((0,T); L^1(\Omega)), \label{est:psiX2} \\[5pt]
      \psi |\ln(\psi)|\in &\; L^\infty((0,T); L^1(\Omega)), \label{est:psiLog} \\[5pt]
    \psi\in &\; L^3\left((0,T)\times \Omega\right),\label{est:psiL3}\\[5pt]
     \psi \vp(1-\vp)|\al \dx \vp -D \dx \psi|^2\in &\;L^1\left((0,T)\times \Omega\right).\label{est:gradient}
\end{align}

\end{proposition}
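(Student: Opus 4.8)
The plan is to derive the bounds in the order in which they become available, treating every computation as a formal manipulation that is made rigorous on the non-degenerate $\delta$-approximation of Section~\ref{sec:existence} (where the solutions are smooth and $S_\delta>0$) and then passed to the limit by weak lower semicontinuity and Fatou's lemma; all integrations by parts on $\R^d$ are justified by spatial cutoffs, using the moment controls below to kill the boundary contributions. The pointwise bound~\eqref{est:phiLinf} is immediate: solving the ODE~\eqref{eq}$_2$ pointwise in $\x$ gives $\vp(t,\x)=\vp^0(\x)\exp\!\big(-\gamma\int_0^t\psi(s,\x)\,ds\big)$, so $0\le\vp\le\vp^0\le1$ and $\vp$ is nonincreasing in time. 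The $L^1$ half of~\eqref{est:psiL1} follows by integrating~\eqref{eq}$_1$ over $\Omega$: the flux integrates to zero and $\frac{d}{dt}\int\psi=R_0\int\psi(1-\vp-\psi)\le R_0\int\psi$ since $\vp,\psi\ge0$, whence $\|\psi(t)\|_{L^1}\le e^{R_0t}\|\psi^0\|_{L^1}$ by Gr\"onwall.

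The core of the argument is a single energy identity obtained by testing~\eqref{eq}$_1$ with the potential $\mu:=\alpha\vp-D\psi$, chosen precisely so that $\V=S(\vp)\nabla\mu$. Then $-\int\nabla\cdot(\psi\V)\,\mu=\int\psi\V\cdot\nabla\mu=\int\psi S(\vp)|\nabla\mu|^2$, which is exactly the dissipation in~\eqref{est:gradient}. Rewriting the time-derivative term as $\int\dt\psi\,\mu=\frac{d}{dt}\int(\alpha\psi\vp-\tfrac D2\psi^2)+\alpha\gamma\int\vp\psi^2$ (using~\eqref{eq}$_2$ to eliminate $\int\psi\dt\vp$), and expanding the reaction contribution $-\int\psi R\mu$ with $R=R_0(1-\vp-\psi)$, integration over $(0,T)$ yields
\begin{equation*}
\int_0^T\!\!\int\psi S(\vp)|\nabla\mu|^2+\tfrac D2\|\psi(T)\|_{L^2}^2+R_0D\int_0^T\!\!\int\psi^3=\tfrac D2\|\psi^0\|_{L^2}^2+\alpha\int\psi\vp\,\Big|_0^T+(\text{terms of order }\le\psi^2).
\end{equation*}
The decisive feature is that the logistic reaction supplies the coercive cubic term $+R_0D\int\psi^3$ on the left, which both gives~\eqref{est:psiL3} and absorbs the quadratic remainder: since $\vp\le1$ and, by $L^p$ interpolation, $\|\psi\|_{L^2}^2\le\|\psi\|_{L^1}^{1/2}\|\psi\|_{L^3}^{3/2}\le\tfrac1{2\eta}\|\psi\|_{L^1}+\tfrac\eta2\|\psi\|_{L^3}^3$, the space-time $L^2$ norm on the right is absorbed into $R_0D\int_0^T\int\psi^3$ for $\eta$ small; meanwhile $\alpha\int\psi\vp(T)\le\alpha\|\psi(T)\|_{L^1}$ is controlled by the $L^1$ bound and $\|\psi^0\|_{L^2}^2<\infty$ by~\eqref{as:1}. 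Running this inequality for every $T'\le T$ delivers simultaneously the $L^2$ half of~\eqref{est:psiL1}, the dissipation~\eqref{est:gradient}, and~\eqref{est:psiL3}.

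The second-moment bound~\eqref{est:psiX2} then follows by testing~\eqref{eq}$_1$ with $|\x|^2/2$: integration by parts gives $\frac{d}{dt}\int\frac{|\x|^2}2\psi=\int\psi S(\vp)\,\x\cdot\nabla\mu+\int\frac{|\x|^2}2\psi R$, and Young's inequality splits the flux as $\le\tfrac12\int\psi S|\nabla\mu|^2+\tfrac12\int\psi S|\x|^2\le\tfrac12\int\psi S|\nabla\mu|^2+\tfrac14\int\frac{|\x|^2}2\psi$, using $S=\vp(1-\vp)\le\tfrac14$; since $R\le R_0$ and the dissipation is now time-integrable by~\eqref{est:gradient}, Gr\"onwall closes the estimate from the finite initial second moment in~\eqref{as:1}. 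Finally,~\eqref{est:psiLog} is a consequence of~\eqref{est:psiL1} and~\eqref{est:psiX2}: the positive part obeys $(\psi\ln\psi)_+\le\psi^2$ (as $\ln\psi\le\psi$), controlled by the $L^2$ bound, while the negative part $\psi\ln(1/\psi)\,\mathbf 1_{\{\psi\le1\}}$ is controlled by the second moment through the standard splitting against a Gaussian weight.

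The main obstacle is the coexistence of the sign-indefinite haptotactic drift and the double degeneracy of $S$ at $\vp=0$ and $\vp=1$, which rules out the usual independent $L^2$ and entropy estimates. The idea that circumvents it is the multiplier $\mu=\alpha\vp-D\psi$, which collapses the entire degenerate flux into the single nonnegative dissipation $\int\psi|\V|^2/S=\int\psi S(\vp)|\nabla\mu|^2$ and lets the logistic reaction provide, through its cubic term, exactly the coercivity needed to absorb the leftover quadratic contributions; the only remaining care is in justifying these formal identities on the regularized problem (via cutoffs and the moment bounds) and in the interpolation–absorption step that closes the $L^2$–$L^3$–dissipation loop.
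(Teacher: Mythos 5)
Your proof is correct and follows essentially the same route as the paper: the same gradient-flow energy with multiplier $\alpha\vp - D\psi$, yielding simultaneously the dissipation~\eqref{est:gradient}, the $L^\infty((0,T);L^2)$ bound and the $L^3$ coercivity supplied by the cubic part of the logistic reaction, followed by the same second-moment estimate (Cauchy--Schwarz/Young against the dissipation) and the same three-region, Gaussian-weight splitting for the entropy bound~\eqref{est:psiLog}. The only cosmetic difference is bookkeeping: you absorb the quadratic remainder into the cubic term via $L^1$--$L^3$ interpolation and Young's inequality in a single integrated identity, whereas the paper first extracts a space-time $L^2$ bound from the mass identity and runs a Gronwall argument on the energy before returning to the integrated identity.
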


\begin{proof}\\
\REV{The proposition follows from Section~\ref{sec:existence} but, in preparation, we give here the main calculations which hold for strong solutions.}
\\
\underline{Step 1. Upper bound on $\vp$.} The bound~\eqref{est:phiLinf} on $\vp$ comes directly from integrating Eq.~\eqref{eq}$_2$ and using the non-negativity of $\psi$, yielding that $\vp$ can only decrease in time. Therefore $\vp\in L^\infty((0,\infty)\times\Omega)$.
\\

\noindent \underline{Step 2. $L^1$, $L^2$ and $L^3$ bounds on $\psi$, the energy and initial gradients control.} Integrating Eq.~\eqref{eq}$_1$, in space and time, and applying the Gronwall lemma we obtain
\begin{equation}\label{est:psiL1a}
\intx \psi(t,\x) \dxi \leq \intx\psi^0(\x) \dxi \, e^{R_0 t}
\end{equation}
which yields $\psi\in L^\infty((0,T); L^1(\Omega))$ thanks to~\eqref{as:1}. The same integration, knowing~\REV{\eqref{est:psiL1a}}, yields 
\begin{equation} \label{est:psiL2}
 R_0 \intt\hspace{-5pt} \intx\psi(t,\x)^2 \dxi \leq C_1(T)
\end{equation}
for some constant $C_1(T)$, i.e. $\psi\in L^2((0,T)\times \Omega)$ as $R_0>0$.  Since the problem has gradient flow structure we use the energy
\begin{align*}
\frac{d}{dt} \intx\psi\left(\frac D 2 \psi - \al \vp\right)\dxi &=  \intx \left[ \partial_t \psi ( D  \psi - \al \vp) -\al \psi \partial_t \vp\right]\dxi
\\
&=-  \intx \psi \vp(1-\vp)|\al \dx \vp -D \dx \psi|^2\dxi +\intx \psi R  ( D  \psi - \al \vp)\dxi+\intx\al \gamma \psi^2 \vp\dxi .
\end{align*}
From this, using definition~\eqref{def:R} for $R$ and the bound~\eqref{est:phiLinf} with assumption~\eqref{as:1} on $\vp^0$, some simple algebra leads to
\begin{equation*}
    \frac{d}{dt} \intx\psi\left(\frac D 2 \psi - \al \vp\right)\dxi \leq C_2 \intx\psi\left(\frac D 2 \psi - \al \vp\right)\dxi +\alpha (C_2+R_0) \intx\psi\dxi ,
\end{equation*}
where $C_2:= \frac{2}{D}\left( DR_0+\alpha R_0+\alpha\gamma\right)$. Then, using\REV{~\eqref{est:psiL1a}}, assumption~\eqref{as:1} and the Gronwall lemma, we conclude that
\begin{equation}\label{est:energy}
     \intx\psi(t,\x)\left(\frac D 2 \psi(t,\x) - \al \vp(t,\x)\right)\dxi \leq C_3(t) 
\end{equation}
for some constant $C_3(t)$. From the same equality for the energy, integrating in time for $\REV{t}\in[0,T]$ and using~\eqref{est:energy}, we obtain 
\begin{equation*}
    D R_0 \displaystyle \intt\hspace{-5pt} \intx\psi (t,\x)^3 \dxi \dti \leq C_4(T) \quad \text{and} \quad \intx\psi \vp(1-\vp)|\al \dx \vp -D \dx \psi|^2\dxi \dti \leq C_5(T),
\end{equation*}
i.e. respectively estimates~\eqref{est:psiL3}, since $D,R_0>0$, and~\eqref{est:gradient}. From~\eqref{est:energy} and\REV{~\eqref{est:psiL1a}} we also have
\begin{equation*}
    \sup_{t\leq T}\intx\psi (t, \x)^2 \dxi \leq C_6(T),
\end{equation*}
i.e. $\psi\in L^\infty((0,T); {L^{\REV{2}}}(\Omega))$. This concludes estimate~\eqref{est:psiL1}. 
\\

\noindent \underline{Step 3. The second moment.} Then, the second moment can be estimated as
\begin{align*}
\frac{d}{dt} \intx\frac{|\x|^2}{2} \psi(t,\x) \dxi &= \intx |\x|\, \psi \vp (1-\vp) |\al \dx \vp -D \dx \psi| \dxi+ \intx \frac{|\x|^2}{2}  \psi R_0(1-\psi-\vp) \dxi
\\
&\leq \sqrt{ \intx |\x|^2  \psi \vp (1-\vp) \dxi \;  \intx\psi \vp (1-\vp) |\al \dx \vp -D \dx \psi|^2\dxi }\; +
\intx\frac{|\x|^2}{2}  \psi R_0 \dxi \\
&\leq   \intx\frac{|\x|^2}{2} \psi \vp (1-\vp) \dxi+ \frac 12  \intx\psi \vp (1-\vp) |\al \dx \vp -D \dx \psi|^2\dxi  +
\intx\frac{|\x|^2}{2}  \psi R_0 \dxi ,
\end{align*}
where we applied the Cauchy-Schwarz and the AM-GM inequalities to the first term. From this, thanks to the Gronwall lemma, estimate~\eqref{est:gradient} and assumption~\eqref{as:1}, we conclude~\eqref{est:psiX2}.\\

\noindent \underline{Step 4. The entropy.} Following the strategy presented in~\cite{doumic2024multispecies}, we obtain~\eqref{est:psiLog} as a consequence of \eqref{est:gradient} and \eqref{est:psiX2} by writing
\begin{equation*}
\intx \psi  | \ln(\psi) | \dxi =\int_{\psi >1}\psi  | \ln(\psi) | \dxi + \int_{1 \geq \psi \geq e^{-|\x|^2}}\psi  | \ln(\psi) | \dxi + \int_{\psi  <  e^{-|\x|^2}}\psi  | \ln(\psi) | \dxi \,.
\end{equation*}
For the first term we have
\[
\int_{\psi >1}\psi  | \ln(\psi) | \dxi \leq \int_{\psi >1} \psi^2\dxi\,,
\]
which is controlled thanks to \eqref{est:gradient}. For the second term, $| \ln(\psi) |$ being decreasing, we have
\[
 \int_{1 \geq \psi \geq e^{-|\x|^2}} \psi  | \ln(\psi) | \dxi \leq  \int_{1 \geq \psi \geq e^{-|\x|^2}} \psi (t,\x) |\x|^2 \dxi \,,
\]
which is controlled thanks to  \eqref{est:psiX2}. Finally, for the third term,  the inequality $\psi  | \ln(\psi) | \leq   \sqrt \psi $  gives
\[
 \int_{\psi  <  e^{-|\x|^2}}\psi  | \ln(\psi) | \dxi  \leq  \int_{\psi  <  e^{-|\x|^2}} \sqrt \psi \dxi \leq \int_{\psi  <  e^{-|\x|^2}} e^{-|\x|^2/2} \dxi \,,
 \]
 which is also under control. Altogether we obtain~\eqref{est:psiLog}. \hfill\qed
\end{proof}


\subsection{Additional regularity results under assumptions~\eqref{as:2}}\label{sec:estimates:2}

\begin{proposition} [Further \red{bounds}] \label{prop2}
Let $\Omega\equiv\mathbb{R}^d$, and let 
\REV{$(\psi,\varphi)$, with} $\psi\geq0$ and $\vp\geq0$\REV{, be a weak solution built in Section~\ref{sec:existence}}
of~\eqref{eq} and definitions~\eqref{def:V}-\eqref{def:R}, complemented with initial conditions $\psi(0,\x)=\psi^0(\x)$ and $\vp(0,\x)=\vp^0(\x)$ satisfying assumptions~\eqref{as:1} and~\eqref{as:2}. 
We then have the following \red{bounds} for all $T>0$: 
\begin{align}
    | \nabla \sqrt{\vp} | \; \in &\; L^\infty((0,T); L^2(\Omega)),\label{est:compact:phi}
    \\[5pt]
    \vp   |\dx \psi |^2\in &\; L^1((0,T)\times\Omega) , \label{est:FullGrad:psi}
    \\[5pt]
   \psi |\nabla \sqrt \vp|^2\in &\; L^1((0,T)\times\Omega) , \label{est:FullGrad3:phi}
   \\[5pt]
    \psi \vp(1-\vp)  |\dx \psi |^2\in &\; L^1((0,T)\times\Omega) . \label{est:FullGrad2:psi}
\end{align}
Finally, we also have a bound for all $T>0$
\begin{equation}\label{est:compact:psi}
\big| \nabla \big( \vp^{\frac 1 2} \psi\big) \big|  \in L^{2}+ L^{\frac 32}( (0,T)\times \Omega).
 \end{equation} 
\end{proposition}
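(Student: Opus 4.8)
The plan is to expand the gradient by the product rule and to assign the two resulting terms to the two different degenerate gradient bounds already established. Writing
\[
\nabla\big(\vp^{1/2}\psi\big) = \vp^{1/2}\,\nabla\psi + \psi\,\nabla\sqrt{\vp},
\]
it suffices to show that the first summand belongs to $L^2((0,T)\times\Omega)$ and the second to $L^{3/2}((0,T)\times\Omega)$; their sum then lies in $L^2+L^{3/2}$ as claimed.

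For the first summand I would simply observe that $\big|\vp^{1/2}\,\nabla\psi\big|^2 = \vp\,|\nabla\psi|^2$, which is integrable on $(0,T)\times\Omega$ by the bound~\eqref{est:FullGrad:psi}; hence $\vp^{1/2}\,\nabla\psi \in L^2$. For the second summand, I would factor $\psi\,\nabla\sqrt\vp = \sqrt\psi\cdot\big(\sqrt\psi\,\nabla\sqrt\vp\big)$. The bracketed factor lies in $L^2$, since its square equals $\psi\,|\nabla\sqrt\vp|^2$, controlled by~\eqref{est:FullGrad3:phi}; and $\sqrt\psi\in L^6$ because $(\sqrt\psi)^6 = \psi^3$ is integrable by~\eqref{est:psiL3}. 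H\"older's inequality with conjugate weights $6$ and $2$, for which $\tfrac16+\tfrac12 = \tfrac23$, then gives $\psi\,\nabla\sqrt\vp \in L^{3/2}$.

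I do not expect a genuine analytical obstacle: once Proposition~\ref{prop2} has supplied~\eqref{est:FullGrad:psi},~\eqref{est:FullGrad3:phi} and~\eqref{est:psiL3}, the result is essentially a H\"older bookkeeping argument. The one conceptual point to flag is that the two summands carry genuinely different integrability -- the diffusive piece $\vp^{1/2}\nabla\psi$ is quadratically controlled, whereas the haptotactic-type piece $\psi\nabla\sqrt\vp$ is only cubically controlled through $\psi\in L^3$ -- so the degeneracy forces the conclusion into the sum space $L^2+L^{3/2}$ rather than a single Lebesgue space. Finally, since $(\psi,\vp)$ is only a weak solution, the product-rule identity above should be read as holding for the strong solutions of the approximate non-degenerate problem of Section~\ref{sec:existence} and then transferred to the limit, so that~\eqref{est:compact:psi} is stable under the limiting procedure.
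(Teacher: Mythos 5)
There is a genuine gap: your argument is circular with respect to the proposition it is meant to prove. Estimates~\eqref{est:FullGrad:psi} and~\eqref{est:FullGrad3:phi}, which you invoke as ``already established'' inputs, are themselves two of the five claims of Proposition~\ref{prop2} (only~\eqref{est:psiL3} comes from Proposition~\ref{prop1}). What you have actually proved is the implication ``\eqref{est:FullGrad:psi}, \eqref{est:FullGrad3:phi}, \eqref{est:psiL3} $\Rightarrow$ \eqref{est:compact:psi}'', which is the easy final step; the substantive content of the proposition is the derivation of~\eqref{est:compact:phi}--\eqref{est:FullGrad2:psi}, and this is where the degeneracy and the restriction $\vp^0\le\frac23$ must be confronted. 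The paper's key idea, entirely absent from your proposal, is to combine the evolution of the weighted quantity $\frac{|\dx\vp|^2}{2}(1-\vp)$ (scaled by $\al/\gamma$) with the entropy identity for $\intx\psi\ln(\psi)\dxi$: the haptotaxis cross terms $\pm\al\,(1-\vp)\vp\,\dx\vp\cdot\dx\psi$ cancel exactly, leaving the coercive term $-D\intx(1-\vp)\vp|\dx\psi|^2\dxi$, and assumption~\eqref{as:2} enters precisely to guarantee the sign condition $1-\frac32\vp\ge0$ in the term $-\gamma\left(1-\frac32\vp\right)\psi|\dx\vp|^2$. This yields~\eqref{est:FullGrad:psi}. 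One then uses $2\dt\sqrt\vp=-\gamma\sqrt\vp\,\psi$, Young's inequality and Gronwall to get~\eqref{est:compact:phi} and~\eqref{est:FullGrad3:phi} from~\eqref{est:FullGrad:psi}, and finally~\eqref{est:FullGrad2:psi} by expanding~\eqref{est:gradient} against~\eqref{est:FullGrad3:phi}. Without these steps the proposition is not proved.

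For the part you did address, your argument is correct and coincides with the paper's Step~3: the same decomposition $\nabla(\vp^{\frac12}\psi)=\vp^{\frac12}\nabla\psi+\sqrt\psi\,\big(\sqrt\psi\,\nabla\sqrt\vp\big)$, with the first term in $L^2$ by~\eqref{est:FullGrad:psi} and the second in $L^{\frac32}$ by H\"older (your exponents $6$ and $2$ are equivalent to the paper's $p=4$, $q=4/3$ applied to the $\tfrac32$-power). Your closing remark about performing the computation on the non-degenerate approximation of Section~\ref{sec:existence} and passing to the limit is also consistent with the paper's framework. But as it stands, the proposal establishes one estimate out of five, assuming the hardest two.
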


We cannot obtain compactness estimates on $\psi$ because of the degeneracy when $\vp$ vanishes locally. This leads us to a gradient estimate on the quantity in \eqref{est:compact:psi} which vanishes with $\vp$ and is enough for our later purpose. \\

\begin{proof}\\
\underline{Step 1. A stronger gradient control.} We first write 
 \begin{align*}
 \partial_t \left[\frac{|\dx \vp|^2}{2} (1-\vp)\right]& = (1-\vp)  \dx \vp  \cdot \left(  \partial_t \dx \vp\right)-  \frac{|\dx \vp|^2}{2}\partial_t \vp
 \\
 &=-  \gamma(1-\vp)   \left[ \psi |\dx \vp|^2 +\vp  \dx \vp \cdot\dx \psi \right] + \gamma \frac{|\dx \vp|^2}{2} \vp \psi\\
 &=-\gamma\left(1- \frac 3 2 \vp \right)\psi |\dx\vp|^2  - \gamma (1-\vp) \vp  \dx \vp\cdot\dx \psi \quad \leq  - \gamma (1-\vp) \vp  \dx \vp\cdot\dx \psi  ,
 \end{align*} 
retrieving the inequality thanks to assumption \eqref{as:2} and the bound~\eqref{est:phiLinf}. Secondly, we use the entropy
\begin{equation}\label{est:entropy}
\frac{d}{dt} \intx \psi  \ln(\psi) \dxi = \intx\dx \psi \cdot\left[ (1-\vp) \vp (\al \dx \vp -D \dx \psi)\right]\red{\dxi} +  \intx\psi (1+ \ln(\psi)) R_0(1-\psi-\vp)\red{\dxi},
\end{equation} 
and thus, combining these two equalities, obtain 
\begin{align*}
\frac{d}{dt} \intx &\bigg[\frac \al \gamma \frac{|\dx \vp|^2}{2} (1-\vp) +  \psi  \ln(\psi) \bigg]\dxi 
\\
&\leq  \intx (1-\vp) \vp \left[ - \al \dx \vp\cdot\dx \psi  +  \al \dx \vp\cdot  \dx \psi  - D | \dx \psi |^2  \right] \dxi +  \intx\psi (1+ \ln(\psi)) R_0(1-\psi-\vp)\dxi
\\
&\leq - D \intx (1-\vp) \vp |\dx \psi |^2\dxi+ R_0 \intx \left[\psi + (1-\vp)\psi\ln(\psi)) -\psi^2 -\psi^2\ln(\psi) - \vp\psi \right] \dxi
\\
&\leq - D \intx (1-\vp) \vp |\dx \psi |^2\dxi+ R_0  \intx \psi \dxi +R_0 \intx  \psi\ln(\psi) \dxi - R_0 \intx\psi^2\ln(\psi)  \dxi
\\
&\leq - D \intx (1-\vp) \vp |\dx \psi |^2\dxi+ R_0  \intx \psi \dxi +R_0 \int_{\psi\geq1}  \psi|\ln(\psi)| \dxi + R_0 \int_{\psi<1}\psi^2|\ln(\psi)|  \dxi
\\
&\leq - D \intx (1-\vp) \vp |\dx \psi |^2\dxi+ R_0  \intx \psi \dxi +R_0 \intx  \psi|\ln(\psi)| \dxi  \,.
\end{align*} 
Since the second and third terms are controlled by estimates~\eqref{est:psiL1} and~\eqref{est:psiLog} respectively, integrating in time we conclude 
\begin{equation}\label{est:changes}
{\frac \al \gamma \intx  \frac{|\dx \vp|^2}{2} (1-\vp)\dxi \leq C_7(T)}, \qquad\\
\end{equation} 
and
$$
 D \intt\hspace{-5pt} \intx (1-\vp) \vp  |\dx \psi |^2\dxi \dti \leq C_8(T),$$
the latter corresponding to estimate~\eqref{est:FullGrad:psi} since $\alpha,\,\gamma,\,D>0$.\\

\noindent \underline{Step 2. Gradient estimate for $\sqrt{\vp}$.} 
Building on the ideas presented in~\cite{zhigun2018strongly,zhigun2016global}, at this stage we notice that $2\dt \sqrt \vp=- \sqrt \vp \psi $ and thus
\begin{align*}
\frac{d}{dt}\intx|\nabla \sqrt \vp |^2\dxi & = 2  \intx \nabla \sqrt \vp\cdot \nabla \dt \sqrt \vp \dxi = - \gamma  \intx\nabla \sqrt \vp\cdot \nabla ({\sqrt \vp \psi})\dxi\\
&=- \gamma  \intx\left[ |\nabla \sqrt \vp|^2 \psi +\sqrt \vp\,  \nabla \sqrt \vp \cdot \nabla \psi\right]\dxi
\\
&\leq - \gamma  \intx|\nabla \sqrt \vp|^2 \psi \dxi+\frac {\gamma}{2} \intx|\nabla \sqrt \vp |^2\dxi + \frac {\gamma}{2} \intx  \vp  | \nabla  \psi |^2\dxi \\
&\leq - \gamma  \intx|\nabla \sqrt \vp|^2 \psi \dxi+\frac {\gamma}{2} \intx|\nabla \sqrt \vp |^2\dxi + \frac {3\gamma}{2} \intx (1-\vp) \vp  | \nabla  \psi |^2\dxi
\end{align*} 
where we have used the upper bound~\eqref{est:phiLinf} under assumption~\eqref{as:2} in the last step.  
Using the estimate \eqref{est:FullGrad:psi} and the Gronwall lemma, we conclude that for $t\leq T$
\begin{equation*} 
\intx|\nabla \sqrt \vp |^2 \dxi \leq C_9(T) , \qquad \quad \intt\hspace{-5pt} \intx\psi |\nabla \sqrt \vp|^2 \dxi \dti \leq C_{10}(T)\,,
\end{equation*} 
i.e. estimates~\eqref{est:compact:phi} and~\eqref{est:FullGrad3:phi}.
Moreover, using the second estimate we just obtained on~\eqref{est:gradient} we have 
\begin{equation*}
\intt\hspace{-5pt} \intx\psi  \vp (1-\vp) |\nabla \psi|^2  \dxi \dti \leq C_{11}(T) ,
\end{equation*} 
concluding~\eqref{est:FullGrad2:psi}.\\

\noindent \underline{Step 3.  Gradient estimate for $\vp^{\frac 1 2}\psi$.}
Finally, we expand 
$$
\nabla \big( \vp^{\frac 1 2} \psi\big) =\vp^{\frac 1 2}\nabla \psi 
+\psi^{\frac 1 2}\; \psi^{\frac 1 2} \nabla \sqrt \vp \, .
$$
The first term on the right belongs to  $L^2((0,T)\times\Omega)$ thanks to \eqref{est:FullGrad:psi}. The second term belongs to $L^{\frac 3 2}((0,T)\times\Omega)$ given that
\[
\intt\hspace{-5pt} \intx \left|\psi^{\frac 1 2}\; \left( \psi^{\frac 1 2}\nabla \sqrt \vp \right)\right|^{\frac 3 2 } \dxi\dti \leq \left(\intt\hspace{-5pt} \intx \psi^3 \dxi\dti 
 \right)^{\frac 1 4}\left(\intt\ \intx \vp \left|\nabla \psi\right|^{2 } \dxi\dti \right)^{\frac 3 4}\,,
\]
obtained by applying the H\"older inequality (with $p=4$, $q=4/3$), is bounded thanks to~\eqref{est:psiL3} and~\eqref{est:FullGrad3:phi}.  \hfill \qed
 \end{proof}

\section{Local compactness}
\label{sec:compactness}

Next, we require space and time compactness. 
Under the stronger assumption~\eqref{as:2} on the upper bound of $\vp^0$, we obtained local compactness in space of $\sqrt{\vp}$ from~\eqref{est:compact:phi} (and of $\vp$ since $ |\dx \vp |^2 = 4 | \dx \sqrt \vp |^2 \vp $ is also bounded) and of $\vp^{\frac 1 2} \psi$
from~\eqref{est:compact:psi}.

We now focus on time compactness. First of all, since $\displaystyle{ | \partial_t \sqrt{\vp} | = \frac{\gamma}{2}\sqrt{\vp}\psi}$, from estimates~\eqref{est:phiLinf} and~\eqref{est:psiL3} we conclude that 
\begin{equation}\label{est:timecompact:vp}
    | \partial_t \sqrt{\vp} | \in L^3((0,T)\times\Omega)\,,
\end{equation}
i.e., local time compactness for $\sqrt{\vp}$ (and for $\vp$, similarly as for space compactness). 

Secondly, we use the Aubin-Lions-Simon lemma~\cite[Theorem 5]{simon1986compact} to gain time compactness of $\vp^{\frac 1 2} \psi$. We combine the equations on $\vp$ and $\psi$ in~\eqref{eq}, to get 
\begin{equation}\label{eqbis}
\dt (\vp^{\frac 1 2} \psi) =-  \dx \cdot \left[\, \vp^{\frac 1 2} \psi\,  {\V}\,\right]+ \;  \psi \dx \vp^{\frac 1 2}\cdot {\V} + \psi  \vp^{\frac 1 2} R -\frac \gamma 2 \vp^{\frac 1 2} \psi^2 \,,
\end{equation}
with ${\V}\equiv {\V}(\psi,\vp)$ and $R\equiv R(\psi,\vp)$ given by~\eqref{def:V}-\eqref{def:R}. 
From this, to apply the Aubin-Lions-Simon lemma, it remains to  estimate the right-hand-side of Eq.~\eqref{eqbis} as the divergence of a term bounded in $L^1_{\rm loc}$ and other terms bounded in $L^1_{\rm loc}$. To get this, 
\REV{we first prove that}
$\vp^{\frac 1 2} \psi\,  {\V}(\psi,\vp)\in L^1( (0,T)\times\Omega )$. \REV{This follows from}
\begin{align*}
\intt\hspace{-5pt} \intx \vp^{\frac 1 2} \psi\,  |{\V}(\psi,\vp)|\dxi\dti &=\intt\hspace{-5pt} \intx \left( \vp^{\frac 1 2}\vp^{\frac 1 2}(1-\vp)^{\frac 1 2}\psi^{\frac 1 2}\right) \left(\psi^{\frac 1 2}\vp^{\frac 1 2}(1-\vp)^{\frac 1 2} |\alpha \dx \vp -D\dx \psi | \right) \dxi\dti
\\
&\leq \left[\intt\hspace{-5pt} \intx \psi \vp^{2} (1-\vp)\dxi\dti
\intt\hspace{-5pt} \intx \psi\vp(1-\vp)\left|\alpha \nabla \vp- D \nabla \psi\right|^2\dxi\dti \right]^\frac{1}{2}
\end{align*}
after using the Cauchy-Schwarz inequality. The first integral is bounded by~\eqref{est:psiL1} thanks to bound~\eqref{est:phiLinf}, 
and the second one is bounded by estimate~\eqref{est:gradient}\REV{, allowing us to conclude a uniform bound in the space
\begin{equation}
    \vp^{\frac 1 2} \psi\,  |{\V}(\psi,\vp)|\in L^1_{loc}((0,T)\times\Omega).
\end{equation}}

The second term on the right-hand side of Eq.~\eqref{eqbis} is bounded in $L^1$ since  we have
\begin{align*}
    \intt\hspace{-5pt}\intx  \psi |\dx &\vp^{\frac 1 2}\cdot {\V}(\psi,\vp)| \dxi\dti 
    = \intt\hspace{-5pt} \intx  \big |\dx \sqrt{\vp} \cdot \left[ \psi\vp(1-\vp)(\alpha \dx \vp - D\dx\psi) \right]\big |\dxi\dti
    \\
    &\leq \left[\intt\hspace{-5pt} \intx \vp (1-\vp)\psi\left|\dx\sqrt{\vp}\right|^2 \dxi\dti  
    \intt\hspace{-5pt} \intx  \psi\vp(1-\vp) \left|\alpha \dx \vp -D\dx \psi\right|^2 \dxi\dti \right]^{\frac 1 2} \hspace{-3pt} ,
\end{align*}
again after using the Cauchy-Schwarz inequality. The first integral is bounded by estimate~\eqref{est:FullGrad3:phi} thanks to bound~\eqref{est:phiLinf} and under assumption~\eqref{as:2}. The second integral is bounded by estimate~\eqref{est:gradient}.
Finally, the last two terms in Eq.~\eqref{eqbis} are bounded in $L^1$ thanks to~\eqref{est:phiLinf} and~\eqref{est:psiL3}. \REV{These bounds conclude time compactness proof}. 

\section{Stability with respect to initial perturbations}
\label{sec:stability} 

 We conclude by proving stability of weak solutions under initial perturbations, through the convergence of subsequences of solutions corresponding to initially perturbed data.

\begin{definition} \label{def:weak} We say that $\psi(t,\x)$ and $\vp(t,\x)$, satisfying all properties of Propositions~\ref{prop1} and~\ref{prop2}, are weak solutions of system~\eqref{eq} under initial conditions $\psi^0(\x)$ and $\vp^0(\x)$ satisfying properties~\eqref{as:1} if, for all $T>0$ and all test functions $\chi(t,\x)\in C_c^\infty ([0,T]\times\Omega)$ with $\chi(T,\x)\equiv0$,
\[
 -\int_0^T \hspace{-5pt}\int_\Omega  \psi \dt \chi\dxi\dti =  \int_\Omega  \psi^0 \chi(0,\x)\dxi + \int_0^T \hspace{-5pt}\int_\Omega 
 \psi \nabla \chi\cdot {\V}(\psi,\vp) \dxi\dti + 
\intt\hspace{-5pt}\intx \chi \psi  \bar R(\psi,\vp)\dxi\dti
\]
\[
\vp(t,x)= \vp^0(x)-\gamma\int_0^t \vp \psi(s,x) ds, \qquad \forall t\geq 0, \; x \in \Omega,
\]
where ${\V}(\psi,\vp)$ and $\bar R(\psi,\vp)\equiv R(\psi,\vp)$ are given by~\eqref{def:V}-\eqref{def:R}. 
\end{definition}

It is necessary to introduce a possible weak limit $\bar R$ because of the degeneracy of Eq.~\eqref{eq}. For example, if $\vp^0\equiv 0$, then initial oscillations in $\psi^0_\eps$ are not regularized and the weak limit of $\psi_\eps R(\psi_\eps, \vp^0)=-R_0 \psi_\eps^2$ will \red{not} be $-R_0\psi^2$.  

\begin{theorem} [Weak limits of solutions]
Consider a family of initial data $\vp^0_\eps$, $\psi^0_\eps$ which satisfies, uniformly in $\eps $, the initial property~\eqref{as:1}, \eqref{as:2}. Let $\vp_\eps$, $\psi_\eps$ be the solutions of~\eqref{eq}, in the sense of Definition~\ref{def:weak}, corresponding to initial conditions $\vp^0_\eps$, $\psi^0_\eps$. Then we have that for all $T>0$ and all balls $B\subset\Omega$,
\[
\vp_\eps(t,\x)\to \vp(t,\x) \quad \text{strongly in} \quad L^{p}((0,T)\times B), \quad 1\leq p < \infty  \,,
\]
\[
\psi_\eps \rightharpoonup \psi \quad \text{weakly in} \quad L^{3}((0,T)\times \Omega) \, ,
\]
\[
\vp^{\frac 1 2 }_\eps(t,\x) \psi_\eps (t,\x)\to \vp^{\frac 1 2} \psi \;\;\; \text{strongly in} \;\;\; L^{p}((0,T)\times B) , \;\; 1\leq p < 3 \,,
\]
\[
\psi_\eps \to \psi \quad \text{a.e. in the set} \quad  \{\vp>0\}) \, ,
\]
and the limit functions $\vp, \; \psi$ satisfy Eq.~\eqref{eq} in the sense of Definition~\ref{def:weak}, with $\bar R = R$ on the set $\{\vp>0\}$. Moreover, if we also have $0<\vp^0:= \text{w-lim}\, \vp^0_\eps$, then 
\[
\psi_\eps \to \psi \quad \text{strongly in} \quad L^{p}((0,T)\times B), \quad 1\leq p <3 \, ,
\]
 and the limit functions $\vp, \; \psi$ satisfy Eq.~\eqref{eq} in the sense of Definition~\ref{def:weak}, with $\bar R \equiv R$ everywhere. 
\end{theorem}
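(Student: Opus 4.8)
The plan is to combine the uniform \emph{a priori} bounds of Propositions~\ref{prop1} and~\ref{prop2} (which hold for each $\eps$ since the data satisfy~\eqref{as:1}--\eqref{as:2} uniformly) with the local compactness of Section~\ref{sec:compactness}, and then to pass to the limit term-by-term in the weak formulation of Definition~\ref{def:weak}. First I would extract weak limits: since $\psi_\eps$ is bounded in $L^3((0,T)\times\Omega)$ by~\eqref{est:psiL3}, along a subsequence $\psi_\eps\rightharpoonup\psi$ weakly in $L^3$, while the controlled gradients $\dx\sqrt{\vp_\eps}$, $\sqrt{\vp_\eps}\,\dx\psi_\eps$ and $G_\eps:=\sqrt{\psi_\eps\vp_\eps(1-\vp_\eps)}(\al\dx\vp_\eps-D\dx\psi_\eps)$ (bounded in $L^2$ by~\eqref{est:compact:phi},~\eqref{est:FullGrad:psi},~\eqref{est:gradient}), together with $\psi_\eps^2$ (bounded in $L^{3/2}$), converge weakly along a further subsequence. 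The compactness of Section~\ref{sec:compactness}, via the Aubin--Lions--Simon lemma~\cite{simon1986compact}, upgrades these to strong local convergence of $\sqrt{\vp_\eps}$ (hence of $\vp_\eps$, since $0\le\vp_\eps\le1$) in every $L^p((0,T)\times B)$, and of $\vp^{\frac12}_\eps\psi_\eps$ in $L^p((0,T)\times B)$ for $p<3$. Identifying the strong limit of $\vp^{\frac12}_\eps\psi_\eps$ as $\vp^{\frac12}\psi$ (product of a strongly and a weakly convergent factor), then dividing by $\sqrt{\vp_\eps}\to\sqrt{\vp}>0$ on $\{\vp>0\}$, yields $\psi_\eps\to\psi$ a.e. on $\{\vp>0\}$ and strong convergence of $\psi_\eps\vp_\eps\to\psi\vp$.

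Next I would pass to the limit in the weak form. The time-derivative and initial-data terms pass by the weak $L^3$ convergence of $\psi_\eps$ and $\psi^0_\eps\rightharpoonup\psi^0$; the $\vp$-equation passes by the strong convergence of $\vp_\eps$ and of $\vp_\eps\psi_\eps$. For the reaction term, writing $\psi_\eps R=R_0(\psi_\eps-\psi_\eps\vp_\eps-\psi_\eps^2)$, the first two terms converge (weakly, resp.\ strongly), while $\psi_\eps^2\rightharpoonup\overline{\psi^2}$ only weakly; this is what forces the weak limit $\bar R$ of Definition~\ref{def:weak}, which coincides with $R$ precisely where $\psi_\eps\to\psi$ a.e., i.e.\ on $\{\vp>0\}$. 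The flux is the crux: I would write $\psi_\eps\V(\psi_\eps,\vp_\eps)=\sqrt{\psi_\eps\vp_\eps(1-\vp_\eps)}\,G_\eps$, where the prefactor converges strongly in $L^2_{\rm loc}$ (it is $\le\tfrac12\sqrt{\psi_\eps}$, bounded in $L^6$, and convergent a.e.\ since $\psi_\eps\vp_\eps\to\psi\vp$) while $G_\eps\rightharpoonup G$ weakly in $L^2$, so the flux converges weakly to $\sqrt{\psi\vp(1-\vp)}\,G$. It then remains to identify this limit as $\psi\,\V(\psi,\vp)$ given by~\eqref{def:V}: on $\{\vp=0\}$ the prefactor vanishes and so does $\psi\V$, while on $\{\vp>0\}$ I would define $\dx\psi$ for the limit through the distributional identity $\dx\psi=\dx(\vp^{\frac12}\psi)/\sqrt{\vp}-\psi\,\dx\sqrt{\vp}/\sqrt{\vp}$ (meaningful since $\sqrt\vp$ is locally bounded below there) and check, using the weak $L^2$ limits of $\sqrt{\vp_\eps}\,\dx\vp_\eps$ and $\sqrt{\vp_\eps}\,\dx\psi_\eps$ together with the strong local $L^3$ convergence of the scalar factor $\sqrt{\psi_\eps(1-\vp_\eps)}$ (from the a.e.\ convergence of $\psi_\eps$ on $\{\vp>0\}$), that $G=\sqrt{\psi\vp(1-\vp)}(\al\dx\vp-D\dx\psi)$ there, whence $\sqrt{\psi\vp(1-\vp)}\,G=\psi\vp(1-\vp)(\al\dx\vp-D\dx\psi)=\psi\,\V(\psi,\vp)$.

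I expect the flux identification to be the main obstacle, because $\dx\psi_\eps$ enjoys no compactness of its own --- only the degenerate weighted bound $\sqrt{\vp_\eps}\,\dx\psi_\eps\in L^2$ --- so the limiting diffusion current can be given meaning solely where $\vp>0$, and the passage to the limit must exploit the vanishing of the prefactor $\sqrt{\psi\vp(1-\vp)}$ to neutralise the vacuum $\{\vp=0\}$. Lower semicontinuity of the norms then transfers the bounds of Propositions~\ref{prop1}--\ref{prop2} to $(\psi,\vp)$, so the limit is a weak solution in the sense of Definition~\ref{def:weak} with $\bar R=R$ on $\{\vp>0\}$. For the final assertion, if $0<\vp^0=\text{w-lim}\,\vp^0_\eps$, the representation $\vp(t,\x)=\vp^0(\x)\exp(-\gamma\int_0^t\psi\,\mathrm{d}s)$ together with $\psi\in L^1_{\rm loc}$ gives $\vp>0$ a.e.; hence $\psi_\eps\to\psi$ a.e.\ on all of $(0,T)\times B$, which with the uniform $L^3$ bound and Vitali's theorem upgrades to strong convergence in $L^p((0,T)\times B)$ for $p<3$ and makes $\bar R\equiv R$ everywhere.
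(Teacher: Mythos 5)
Your proposal is correct, and its skeleton coincides with the paper's proof: extraction of subsequences from the uniform bounds \eqref{est:psiL3}, \eqref{est:compact:phi}, \eqref{est:compact:psi} and the Aubin--Lions--Simon compactness of Section~\ref{sec:compactness}; recovery of a.e.\ convergence of $\psi_\eps$ on $\{\vp>0\}$ by dividing the strong limit of $\vp_\eps^{1/2}\psi_\eps$ by that of $\vp_\eps^{1/2}$; the observation that only the unidentified weak limit of $\psi_\eps^2$ on the vacuum obstructs the reaction term, which is precisely what forces $\bar R$; and the ODE representation of $\vp$ plus Vitali's theorem for the final assertion. Where you genuinely differ is the decomposition of the flux. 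The paper splits it into a haptotaxis part $\mathcal{A}_\eps=\psi_\eps\vp_\eps(1-\vp_\eps)\,\dx\chi\cdot\dx\vp_\eps$, handled directly as a strong $L^{p<3}$ limit paired with the weak $L^2$ limit of $\dx\vp_\eps$, and a diffusion part $\mathcal{B}_\eps$, factored into a strongly convergent scalar times an $L^2$-bounded weighted gradient, the latter identified \emph{globally} through the decomposition $\vp_\eps^{\frac14}\sqrt{\psi_\eps}\,\dx\chi\cdot\dx(\sqrt{\vp_\eps}\psi_\eps)-\vp_\eps^{\frac14}\psi_\eps\sqrt{\psi_\eps}\,\dx\chi\cdot\dx\sqrt{\vp_\eps}$. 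You instead treat the whole flux at once as $\sqrt{\psi_\eps\vp_\eps(1-\vp_\eps)}\,G_\eps$, with $G_\eps$ the dissipation vector bounded in $L^2$ by \eqref{est:gradient}, pass to an abstract weak limit $G$, and identify $G$ only on $\{\vp>0\}$, the vacuum being neutralized by the vanishing prefactor. The identification mechanism is the same in both routes --- the identity $\sqrt{\vp}\,\dx\psi=\dx(\sqrt{\vp}\,\psi)-\psi\,\dx\sqrt{\vp}$ combined with a.e.\ convergence of $\psi_\eps$ where $\vp>0$ --- so the substance is shared. What your packaging buys is conceptual economy: a single weak-limit object, and an explicit statement that no information on $\dx\psi_\eps$ over $\{\vp=0\}$ is ever required; the price is that you must restrict weak limits to the measurable set $\{\vp>0\}$ (legitimate, since weak $L^2$ convergence survives multiplication by the indicator of a measurable set), whereas the paper arranges each strong--weak pairing so that the strongly convergent factor itself vanishes on the vacuum and never needs to localize a weak limit. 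One small slip to fix: your claim that the division by $\sqrt{\vp}$ is meaningful because ``$\sqrt{\vp}$ is locally bounded below'' on $\{\vp>0\}$ is inaccurate --- that set is merely measurable and carries no local lower bound --- but nothing is lost, because the division is a pointwise a.e.\ operation and the resulting formula for $G$ is only ever used after multiplication by the prefactor $\sqrt{\psi\vp(1-\vp)}$, which restores integrability.
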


\begin{proof}

\noindent \underline{Step 1.  Subsequence extraction and pointwise convergence.}
     Thanks to the local compactness results proved in Section~\ref{sec:compactness}, we may extract a subsequence such that for all $T>0$ and all balls $B\subset\Omega$,
\[
\vp_\eps(t,\x)\to \vp(t,\x) \quad \text{strongly in} \quad L^{p}((0,T)\times B), \quad 1\leq p < \infty \qquad \text{and}\quad |\nabla \vp |\in L^\infty((0,T); L^2(\Omega))\,
\]
and this subsequence also converges almost everywhere. For $\psi_\eps$, we cannot argue so simply. On the one hand, from the bound~\eqref{est:psiL3}, we may extract another subsequence such that
\[
\psi_\eps \rightharpoonup \psi \quad \text{weakly in} \quad L^{3}((0,T)\times \Omega) \, .
\]
On the other hand, the local compactness argument in Section~\ref{sec:compactness}, and the $L^3$ bound on $\psi_\eps$, give the strong convergence
\[
\vp^{\frac 1 2}_\eps(t,\x) \psi_\eps (t,\x)\to \vp^{\frac 1 2} \psi \qquad \text{a.e. and thus strongly in} \;\;\; L^{p}((0,T)\times B) , \;\; 1\leq p < 3 \, ,
\]
and the weak convergence
\[
\nabla (\vp_\eps^{\frac 1 2} \psi_\eps) \rightharpoonup  \nabla (\vp^{\frac 1 2} \psi) \in L^{3/2}((0,T)\times B)\, .
\]
Notice that, by weak-strong convergence, we know indeed that the limit is $ \vp^{\frac 1 2} \psi$. 
\REV{In the set $\{\vp(t,\x)>0\}$ we know that $\vp_\eps(t,\x)\to \vp(t,\x)$ almost everywhere pointwise and thus we also have the strong convergence   }
\[
\psi_\eps (t,\x)\to  \psi (t,\x) \quad \text{strongly in} \quad L^{p}((0,T)\times B\cap\{\vp>0\}) , \quad 1\leq p <3 \, .
\]
    Finally, if $0<\vp^0(\x)$, then from Eq.~\eqref{eq}$_2$ (which holds in the limit again by weak-strong convergence, see Step 2) we see that we have $\vp(t,\x)>0$ for all $t\in[0,T]$ and thus conclude that $\psi_\eps $ converges almost everywhere and
\[
\psi_\eps (t,\x)\to  \psi (t,\x) \quad \text{strongly in} \quad L^{p}((0,T)\times B), \quad 1\leq p <3 \,.
\]

\noindent \underline{Step 2. Passing to the limit in Eq.~\eqref{eq}.} We now show that the limit functions $\vp, \; \psi$ satisfy Eq.~\eqref{eq} in the sense of Definition~\ref{def:weak}, by passing to the limit in the equation.

Multiplying both sides of the equations in system~\eqref{eq} for $\vp_\eps, \; \psi_\eps$ by a test function $\chi(t,\x)\in C_c^\infty ([0,T]\times\Omega)$ with $\chi(T,\x)\equiv0$ and integrating by parts we obtain
\begin{equation}\label{eq:weak:psi}
     -\int_0^T \hspace{-5pt}\int_\Omega  \psi_\eps \dt \chi\dxi\dti =  \int_\Omega  \psi_\eps^0 \chi(0,\x)\dxi + \int_0^T \hspace{-5pt}\int_\Omega 
 \psi_\eps \nabla \chi\cdot {\V}(\psi_\eps,\vp_\eps) \dxi\dti + 
\intt\hspace{-5pt}\intx \chi \psi_\eps  R(\psi_\eps,\vp_\eps)\dxi\dti
\end{equation}
and 
\begin{equation}\label{eq:weak:vp}
    \vp_\eps(t,x)= \vp_\eps^0(x)-\gamma\int_0^t \vp_\eps \psi_\eps(s,x) ds, \qquad \forall t\geq 0, \; x \in \Omega.
\end{equation}
The convergence in Eq.~\eqref{eq:weak:vp} is straightforward given strong the convergence results of $\vp_\eps$ and weak convergence of $ \psi_\eps$. In Eq.~\eqref{eq:weak:psi} the convergence is immediate for the time derivative and the initial data, but more delicate for the other terms, particularly for the drift term. We rewrite the second term on the right-hand-side of Eq.~\eqref{eq:weak:psi} as
\[
\int_0^T \hspace{-5pt}\int_\Omega 
 \psi_\eps \nabla \chi\cdot {\V}(\psi_\eps,\vp_\eps) \dxi\dti 
 = \int_0^T \hspace{-5pt}\int_\Omega 
\alpha \underbrace{ \psi_\eps \vp_\eps(1-\vp_\eps) \; \nabla \chi \cdot\dx \vp_\eps}_{\mathcal{A}_{\eps}}
-D\underbrace{ \psi_\eps \vp_\eps(1-\vp_\eps) \; \nabla \chi \cdot\dx \psi_\eps}_{\mathcal{B}_\eps} \dxi\dti \,.
\]
From the convergence results detailed in Step 1, the term $\mathcal{A}_{\eps}$ converges strongly to 
$\mathcal{A}=\psi\,\vp (1-\vp)\;\dx\chi\cdot\dx \vp$ because  $\psi_\eps \vp_\eps(1-\vp_\eps)$ converges strongly in $L^{p}((0,T)\times B)$, $1\leq p<3$,  while $\dx \vp_\eps$ converges weakly in $L^{2}((0,T)\times B)$. 
For the other term, we write 
$$
\mathcal{B}_\eps =(1-\vp_\eps) \sqrt{\psi_\eps} \vp_\eps^{\frac 1 4} \; \vp_\eps^{\frac 1 4}\sqrt{\psi_\eps \vp_\eps} \nabla \chi \cdot\dx \psi_\eps.
$$
The term $(1-\vp_\eps) \sqrt{\psi_\eps} \vp_\eps^{\frac 1 4}$ converges strongly in any $L^{p}((0,T)\times B)$, $1\leq p<6$, to $(1-\vp) \sqrt{\psi} \vp^{\frac 1 4}$ using Step~1, while $\vp_\eps^{\frac 1 4} \sqrt{\psi_\eps \vp_\eps} \nabla \chi \cdot\dx \psi_\eps$ is bounded in $L^{2}((0,T)\times B)$ thanks to \eqref{est:FullGrad3:phi}, and thus converges weakly. To identify its limit we write it as
$$
\vp_\eps^{\frac 1 4} \sqrt{\psi_\eps \vp_\eps} \nabla \chi \cdot\dx \psi_\eps = 
\underbrace{ \vp_\eps^{\frac 1 4}\sqrt{\psi_\eps} \nabla \chi \cdot\dx (\sqrt{\vp_\eps}\psi_\eps)}_{\mathcal{C}_{1\eps}}
- \underbrace{\vp_\eps^{\frac 1 4} {\psi_\eps}\,  \sqrt{\psi_\eps}\nabla \chi \cdot \nabla \sqrt{\vp_\eps}}_{\mathcal{C}_{2\eps}}.
$$
The term $\mathcal{C}_{1\eps}$ converges because, as before,  $\vp_\eps^{\frac 1 4} \sqrt{\psi_\eps}$ converges strongly to $\vp^{\frac 1 4} \sqrt{\psi}$ in any $L^{p}((0,T)\times B)$, $1\leq p<6$, while $\nabla \chi \cdot\dx (\sqrt{\vp_\eps}\psi_\eps)$ converges weakly in {$L^{\frac 3 2}((0,T)\times B)$} to $\nabla \chi \cdot\dx (\sqrt{\vp}\psi)$.
The term $\mathcal{C}_{2\eps}$ converges because, as before,  $\vp_\eps^{\frac 1 4} {\psi_\eps}$ converges strongly to $\vp^{\frac 1 4} {\psi}$  in any $L^{p}((0,T)\times B)$, $1\leq p<3$, while $\sqrt{\psi_\eps}\nabla \chi \cdot \nabla \sqrt{\vp_\eps}$ converges weakly in $L^{2}((0,T)\times B)$ to $\sqrt{\psi}\nabla \chi \cdot \nabla \sqrt{\vp}$. 
 

Finally, \REV{on the set $\{\varphi > 0\}$ we have proved pointwise convergence both for $\varphi_\varepsilon$ and $\psi_\varepsilon$, therefore $R_\varepsilon$ converges a.e. to $R=R_0(1-\varphi-\psi)$. So the pointwise limit of $\psi_\varepsilon R_\varepsilon$ is $\psi R$ and it is also the limit in all $L^p$, $p<3$, spaces.  Otherwise, we can only conclude that $R_\varepsilon$ converges weakly to some function $\bar{R}$, with $\bar{R}=R$ on the set $\{ \vp>0 \}$, but we do not know what $\psi_\varepsilon^2$ converges to on the set $\{\vp=0\}$. We thus conclude that} the limit functions \REV{$(\psi,\vp)$} satisfy Eq.~\eqref{eq} in the sense of Definition~\ref{def:weak}
\hfill \qed 
\end{proof}

\REV{\section{Existence of solutions\label{sec:existence} }}

Following the strategy adopted in~\cite{zhigun2016global,zhigun2016global2}, we infer existence of weak solutions of the degenerate problem from an approximating problem with a non-degenerate velocity field, exploiting the regularity established in Sections~\ref{sec:estimates:1}-\ref{sec:compactness}.

For $\delta>0$, let $\psi_\delta(t,\x)\geq0$ and $\vp_\delta(t,\x)\geq0$ satisfy in the weak sense the system
\begin{equation}\label{eq:delta}
\begin{cases}
    \partial_t \psi_\delta +\nabla\cdot \left[ \psi_\delta \big( \delta+\vp_\delta(1-\vp_\delta)\big)\big(\alpha\nabla\vp_\delta-D\nabla\psi_\delta\big)  \right] = \psi_\delta\, R_0(1-\vp_\delta-\psi_\delta),\\
    \partial_t \vp_\delta=-\gamma\vp_\delta\psi_\delta,
\end{cases}
\end{equation}
for $t\in[0,T]$ and $\x\in\Omega\subseteq\mathbb{R}^d$ ($d=1,2,3$), complemented with initial conditions $\psi_\delta(0,\x)=\psi^0_\delta$ and $\vp_\delta(0,\x)=\vp^0_\delta$ satisfying the properties~\eqref{as:1} and~\eqref{as:2}. 
This system is obtained from the degenerate one by relaxing the saturation term~\eqref{def:S} mapping $S\to S_\delta$ with $$S_\delta(\vp):=\delta +S(\vp)>0$$ for all $\varphi\in[0,1]$ -- cf. Figure~\ref{f1} -- or, analogously, $V\to V_\delta$ with $$V_\delta(\psi,\vp):=S_\delta(\vp)\,(\alpha\nabla\vp-D\nabla\psi)\equiv \delta\,(\alpha\nabla\vp-D\nabla\psi)+ V(\psi,\vp)$$ where it is clear that the degeneracy at $\vp=0$ disappears. \\

\subsection{Regularity results and compactness}
Let us extend the results of Sections~\ref{sec:estimates:1}-\ref{sec:compactness} to the non-degenerate problem.

\begin{proposition}[Bounds and compactness in the non-degenerate problem]
    \label{prop3} Let $\Omega\equiv\mathbb{R}^d$, and let $\psi_\delta\geq0$ and $\vp_\delta\geq0$ be solutions of~\eqref{eq:delta} complemented with initial conditions $\psi_\delta(0,\x)=\psi^0_\delta$ and $\vp_\delta(0,\x)=\vp^0_\delta$ satisfying assumptions~\eqref{as:1} and~\eqref{as:2}. 
    Then we have that $\psi_\delta\geq0$ and $\vp_\delta\geq0$ satisfy uniformly in $\delta$ all bounds listed in Propositions~\ref{prop1} and~\ref{prop2}, as well as the following bound for all $\delta>0$ and $T>0$:
    \begin{equation}\label{est:new}
        \delta\,\psi_\delta\,|D\nabla\psi_\delta-\alpha\nabla\vp_\delta|^2\in L^1 ((0,T)\times\Omega
    \end{equation}
    Moreover, we have local compactness of $\vp_\delta$ and $\vp^{\frac{1}{2}}_\delta \psi_\delta$ in space and time. 
\end{proposition}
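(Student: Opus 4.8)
The plan is to re-run, essentially verbatim, the computations of Propositions~\ref{prop1} and~\ref{prop2} for the regularised system~\eqref{eq:delta}, the only structural change being the replacement of $S(\vp)=\vp(1-\vp)$ by $S_\delta(\vp)=\delta+\vp(1-\vp)$ in the velocity field, and to track the $\delta$-dependence of every constant so as to verify it stays bounded as $\delta\to 0$ (it suffices to treat $\delta\le 1$). Since the equation for $\vp_\delta$ in~\eqref{eq:delta} is identical to~\eqref{eq}$_2$, the bound~\eqref{est:phiLinf}, i.e. $\vp_\delta\le\vp_\delta^0$, holds unchanged, so assumption~\eqref{as:2} is propagated and $\vp_\delta\le\frac 2 3$ throughout. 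The energy identity of Step~2 of Proposition~\ref{prop1} now carries the dissipation $-\intx\psi_\delta S_\delta|\al\dx\vp_\delta-D\dx\psi_\delta|^2$; writing $S_\delta=\vp_\delta(1-\vp_\delta)+\delta$ splits this into the term controlling~\eqref{est:gradient} and the additional nonnegative term $\delta\,\psi_\delta|\al\dx\vp_\delta-D\dx\psi_\delta|^2$, while the reaction terms on the right-hand side are untouched. Hence the Gronwall argument producing~\eqref{est:energy}, and therefore~\eqref{est:psiL1} and~\eqref{est:psiL3}, goes through with $\delta$-uniform constants, and time-integration of the extra dissipation yields precisely the new bound~\eqref{est:new}.

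The second-moment estimate (Step~3 of Proposition~\ref{prop1}) is reproduced with $S$ replaced by $S_\delta$ inside the Cauchy--Schwarz and AM--GM inequalities; the resulting $\delta$-contributions are controlled by~\eqref{est:new} together with the $L^1$ and second-moment bounds, and stay uniform for $\delta\le1$, so~\eqref{est:psiX2} follows, and~\eqref{est:psiLog} is then deduced exactly as before from~\eqref{est:gradient} and~\eqref{est:psiX2}. This establishes all bounds of Proposition~\ref{prop1} uniformly in $\delta$, plus~\eqref{est:new}.

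The main new difficulty lies in Step~1 of Proposition~\ref{prop2}. The identity for $\partial_t\!\left[\frac{|\dx\vp|^2}{2}(1-\vp)\right]$ uses only the $\vp$-equation and is unchanged, but the entropy identity~\eqref{est:entropy} now carries $S_\delta$. Forming the combined functional $\frac \al\gamma\intx\frac{|\dx\vp_\delta|^2}{2}(1-\vp_\delta)+\intx\psi_\delta\ln\psi_\delta$ and cancelling the $\vp_\delta(1-\vp_\delta)$ cross-terms as in the degenerate case now leaves a \emph{residual} term $\al\delta\intx\dx\psi_\delta\cdot\dx\vp_\delta$, together with the extra dissipation $-D\delta\intx|\dx\psi_\delta|^2$. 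I would control this residual by Young's inequality, $\al\delta\,\dx\psi_\delta\cdot\dx\vp_\delta\le\frac{D\delta}{2}|\dx\psi_\delta|^2+\frac{\al^2\delta}{2D}|\dx\vp_\delta|^2$, so that the first part is absorbed by the extra dissipation, leaving only $\frac{\al^2\delta}{2D}\intx|\dx\vp_\delta|^2$. Using $1-\vp_\delta\ge\frac13$ from~\eqref{as:2}, the $\vp$-part of the functional is coercive over $\intx|\dx\vp_\delta|^2$, so this remaining term is bounded by $C\delta$ times the functional (up to the $\delta$-uniform additive constant coming from~\eqref{est:psiLog}) and can be folded into the Gronwall constant, which remains uniform for $\delta\le1$. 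This yields~\eqref{est:changes} and~\eqref{est:FullGrad:psi} uniformly in $\delta$; I expect this reabsorption to be the decisive step of the whole proof.

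Steps~2 and~3 of Proposition~\ref{prop2} then carry over unchanged, since the $\sqrt{\vp}$-gradient computation relies only on the (unaltered) $\vp$-equation and on~\eqref{est:FullGrad:psi}, and the splitting of $\dx(\vp^{\frac 1 2}\psi)$ uses only $\delta$-uniform bounds; this gives~\eqref{est:compact:phi}, \eqref{est:FullGrad3:phi}, \eqref{est:FullGrad2:psi} and~\eqref{est:compact:psi} uniformly. Finally, the local compactness of Section~\ref{sec:compactness} transfers: in the analogue of~\eqref{eqbis} the velocity now features $S_\delta$, and the only new contributions come from its $\delta$-part; applying Cauchy--Schwarz to these against the bound~\eqref{est:new} produces factors $\sqrt\delta$, hence $\delta$-uniform (indeed vanishing) $L^1_{loc}$ bounds on the drift, so the right-hand side of the equation for $\vp_\delta^{\frac 1 2}\psi_\delta$ is again the divergence of an $L^1_{loc}$ field plus $L^1_{loc}$ terms. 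The Aubin--Lions--Simon lemma then yields local space-time compactness of $\vp_\delta$ and $\vp_\delta^{\frac 1 2}\psi_\delta$ uniformly in $\delta$, as claimed.
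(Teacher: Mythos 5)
Your proposal is correct and follows essentially the same route as the paper: the extra $\delta$-dissipation in the energy identity yields the new bound~\eqref{est:new}, the second moment's extra term is absorbed via Cauchy--Schwarz/AM--GM and~\eqref{est:new}, the residual cross-term $\alpha\delta\intx\nabla\psi_\delta\cdot\nabla\vp_\delta$ in the entropy estimate is split by Young's inequality so that half is absorbed by $-D\delta\intx|\nabla\psi_\delta|^2$ and the remaining $\delta|\nabla\vp_\delta|^2$ piece is handled by Gronwall using the coercivity $1-\vp_\delta\geq\frac13$ from~\eqref{as:2}, and the extra drift terms in the equation for $\vp_\delta^{1/2}\psi_\delta$ are bounded in $L^1$ with factors of $\sqrt{\delta}$ before invoking Aubin--Lions--Simon. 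You correctly identified the entropy cross-term as the decisive step, and your treatment of it coincides with the paper's.
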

\begin{proof}

\noindent \underline{Step 1. The bounds in Proposition~\ref{prop1} and the new estimate.}
The proof of bounds~\eqref{est:phiLinf}, \eqref{est:psiL1} and \eqref{est:psiLog}-\eqref{est:gradient} follows the same steps detailed in the proof of Proposition~\ref{prop1}. 
Meanwhile the energy has an extra term, yielding
\begin{equation*}
\begin{split}
    \frac{d}{dt} \intx\psi\left(\frac D 2 \psi_\delta - \al \vp_\delta\right)\dxi +\delta \intx \psi_\delta|D\nabla\psi_\delta-\alpha\nabla\vp_\delta|^2{\rm d}\x \leq \;& C_2 \intx\psi_\delta\left(\frac D 2 \psi_\delta - \al \vp_\delta\right)\dxi \\
    &+\alpha (C_2+R_0) \intx\psi_\delta\dxi ,
\end{split}
\end{equation*}
from which we can again obtain~\eqref{est:energy}. Moreover, integrating in time, we conclude
\begin{equation*}
        \delta \int_0^T\int_\Omega \psi_\delta\,|D\nabla\psi_\delta-\alpha\nabla\vp_\delta|^2\,{\rm d}\x\,{\rm d}t\leq C_{12}(T) \,.
\end{equation*}
thanks to~\eqref{as:1}, \eqref{est:psiL2} and~\eqref{est:energy}, i.e. we have the new bound~\eqref{est:new}. 
This new estimate allows us to conclude the proof of bound~\eqref{est:psiX2}. 
In fact, the second moment also has an additional term
\begin{align*}
\frac{d}{dt} \intx\frac{|\x|^2}{2} \psi_\delta(t,\x) \dxi = & \intx |\x|\, \psi_\delta \vp_\delta (1-\vp_\delta) |\al \dx \vp_\delta -D \dx \psi_\delta| \dxi+ \intx \frac{|\x|^2}{2}  \psi_\delta R_0(1-\psi_\delta-\vp_\delta) \dxi
\\
&+\delta \intx |\x|\,\psi_\delta \,|D\nabla\psi_\delta-\alpha\nabla\vp_\delta|\dxi 
\end{align*}
to which we can apply the Cauchy-Schwarz and the AM-GM inequalities, as we did to the first term (see Step 3 in proof of Proposition~\ref{prop1}), to obtain
\begin{align*}
\frac{d}{dt} \intx\frac{|\x|^2}{2} \psi_\delta(t,\x) \dxi \leq&   \intx\frac{|\x|^2}{2} \psi_\delta \vp_\delta (1-\vp_\delta) \dxi+ \frac 12  \intx\psi_\delta \vp_\delta (1-\vp_\delta) |\al \dx \vp_\delta -D \dx \psi_\delta|^2\dxi   \\[3pt]
&+ \delta \intx \frac{|\x|^2}{2}\psi_\delta\dxi \;+ \frac{\delta}{2}\intx\psi |\alpha\nabla\vp_\delta-D\nabla\psi_\delta|^2\dxi  +
\intx\frac{|\x|^2}{2}  \psi_\delta R_0 \dxi \,.
\end{align*}
We now can apply the Gronwall lemma, thanks to~\eqref{as:1}, \eqref{est:gradient} and the new bound~\eqref{est:new}, and conclude~\eqref{est:psiX2}.\\

\noindent \underline{Step 2. The bounds in Proposition~\ref{prop2}.} The proof of the stronger gradient control~\eqref{est:FullGrad:psi} now needs to also rely on the Gronwall lemma. In fact the entropy has an extra term on the right-hand-side of~\eqref{est:entropy} which can be bounded by
\begin{align*}
    \delta \intx \nabla\psi_\delta (\alpha \nabla\vp_\delta-D\nabla\psi_\delta)\dxi =& -\delta D\intx |\nabla\psi_\delta|^2\dxi +\intx \left( \nabla\psi_\delta\delta^{\frac{1}{2}}D^{\frac{1}{2}} \right) \left(\frac{\delta^{\frac{1}{2}}}{D^{\frac{1}{2}}}\alpha\nabla\vp_\delta\right)\dxi\\
    \leq&-\delta\frac{D}{2}\intx |\nabla\psi_\delta|^2\dxi +\delta \frac{\alpha}{2D}\intx |\nabla\vp|^2\dxi\,.
\end{align*}
Then combining the entropy with $\partial_t \left[\frac{|\dx \vp|^2}{2} (1-\vp)\right]$, the above bound and those detailed in Step 1 of Proposition~\ref{prop2} yield
\begin{align*}
\frac{d}{dt} \intx \bigg[\frac \al \gamma \frac{|\dx \vp|^2}{2} (1-\vp) +  \psi  \ln(\psi) \bigg]\dxi \leq \delta \frac{\alpha}{2D}\intx |\nabla\vp|^2\dxi + R_0  \intx \psi \dxi +R_0 \intx  \psi|\ln(\psi)| \dxi  \,.
\end{align*} 
From here we can apply the Gronwall lemma thanks to bounds~\eqref{est:psiL1}, \eqref{est:psiLog} and~\eqref{as:2}, concluding~\eqref{est:changes}.  
Then bound~\eqref{est:FullGrad:psi} is obtained from the same calculation, as outlined in Step 1 of Proposition\ref{prop1}, knowing~\eqref{est:changes}. The proof of bounds~\eqref{est:compact:phi}, \eqref{est:FullGrad3:phi}-\eqref{est:compact:psi} remains unchanged.\\

\noindent \underline{Step 3. Local compactness.} The proof of local compactness follows the same steps as in Section~\ref{sec:compactness}, with space compactness and bound~\eqref{est:timecompact:vp} retrieved analogously. To gain time-compactness of $\vp^{\frac{1}{2}}_\delta\psi_\delta$ we need to bound extra terms appearing in~\eqref{eqbis} which reads as
\begin{align*}
\dt (\vp_\delta^{\frac 1 2} \psi_\delta) =&-  \dx \cdot \left[\, \vp_\delta^{\frac 1 2} \psi_\delta\,  {\V}(\psi_\delta,\vp_\delta)\,\right]+ \;  \psi_\delta \dx \vp_\delta^{\frac 1 2}\cdot {\V}(\psi_\delta,\vp_\delta) + \psi_\delta  \vp_\delta^{\frac 1 2} R(\psi_\delta,\vp_\delta) -\frac \gamma 2 \vp_\delta^{\frac 1 2} \psi_\delta^2 \\
&-\nabla\cdot\left[\,\delta\, \vp_\delta^{\frac 1 2} \psi_\delta\, \left( \alpha\nabla\vp_\delta-D\nabla\psi_\delta\right)\,\right] + \,\delta \,\psi_\delta \dx \vp_\delta^{\frac 1 2}\cdot \left( \alpha\nabla\vp_\delta-D\nabla\psi_\delta\right)\,.
\end{align*}
Using the Cauchy-Schwarz inequality we see that
\begin{align*}
   \delta \intt\hspace{-5pt} \intx \vp_\delta^{\frac 1 2} \psi_\delta\, | \alpha\nabla\vp_\delta-D\nabla\psi_\delta|\dxi {\rm d}t \leq \delta \left[ \intt\hspace{-5pt} \intx \psi_\delta\dxi{\rm d}t\; \intt\hspace{-5pt} \intx \vp_\delta\psi_\delta| \alpha\nabla\vp_\delta-D\nabla\psi_\delta|^2\dxi {\rm d}t \right]^{\frac{1}{2}}
\end{align*}
is bounded in $L^1_{\text{loc}}((0,T),\Omega)$, as the first integral is bounded by~\eqref{est:psiL1} and the second can be bounded by~\eqref{est:gradient} knowing~\eqref{est:phiLinf} under assumption~\eqref{as:2}. Similarly, we have that
\begin{align*}
\delta \intt\hspace{-5pt} \intx | \dx \vp_\delta^{\frac 1 2}\cdot \psi_\delta\left( \alpha\nabla\vp_\delta-D\nabla\psi_\delta\right)|\dxi {\rm d}t \leq \delta^\frac{1}{2}\left[ \intt\hspace{-5pt} \intx \psi_\delta|\nabla \sqrt{\psi_\delta}|^2\dxi {\rm d}t \; \delta\intt\hspace{-5pt} \intx \psi_\delta|\alpha\nabla\vp_\delta-D\nabla\psi_\delta|^2\dxi {\rm d}t\right]^{\frac{1}{2}}
\end{align*}
is bounded in $L^1_{\text{loc}}((0,T),\Omega)$, since the first integral is bounded by~\eqref{est:FullGrad:psi} and the second by the new bound~\eqref{est:new}. Then we can apply the Aubins-Lions-Simons lemma, concluding the proof. \hfill \qed
\end{proof}

\subsection{Construction of solutions}

The non-degenerate problem~\eqref{eq:delta}, still highly nonlinear, can be handled following the arguments in~\cite{zhigun2016global2,zhigun2018strongly,zhigun2016global} and existence of global strong solutions follows. The solutions satisfy the properties of Proposition~\ref{prop3} uniformly in $\delta$.

\begin{theorem}[Global existence of solutions]
Let $\Omega\equiv\mathbb{R}^d$ $(\psi_\delta,\vp_\delta)$ denote a global solution of the non-degenerate problem~\eqref{eq:delta} complemented with $\psi_\delta(0,\x)=\psi^0_\delta$ and $\vp_\delta(0,\x)=\vp^0_\delta$ satisfying the properties~\eqref{as:1} and~\eqref{as:2}. Then, as $\delta\to 0$, $(\psi_\delta,\vp_\delta)$ admits a subsequence converging to the pair $(\psi,\vp)$, a weak solution of the degenerate problem~\eqref{eq}-\eqref{ic} in the sense of Definition~\ref{def:weak}. 
\end{theorem}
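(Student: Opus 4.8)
The plan is to mirror the proof of the stability theorem in Section~\ref{sec:stability}, since the family $(\psi_\delta,\vp_\delta)$ plays exactly the role there of the perturbed family $(\psi_\eps,\vp_\eps)$. The crucial input is Proposition~\ref{prop3}, which guarantees that all the bounds of Propositions~\ref{prop1}--\ref{prop2} hold uniformly in $\delta$, together with the extra bound~\eqref{est:new} and the local (space and time) compactness of $\vp_\delta$ and $\vp_\delta^{1/2}\psi_\delta$. First I would extract, along a subsequence $\delta\to0$, exactly the convergences established in Step~1 of the stability proof: strong convergence of $\vp_\delta$ in every $L^p_{\rm loc}$ (and a.e.), weak $L^3$ convergence of $\psi_\delta$, strong $L^p_{\rm loc}$ convergence ($1\le p<3$) of $\vp_\delta^{1/2}\psi_\delta$ with weak $L^{3/2}$ convergence of its gradient, and the a.e.\ convergence of $\psi_\delta$ on the set $\{\vp>0\}$. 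I would also assume (or impose) that the initial data converge, $\psi_\delta^0\to\psi^0$ and $\vp_\delta^0\to\vp^0$ in $L^1_{\rm loc}(\Omega)$, so that the initial term in the weak formulation passes to the limit (the simplest case being $\delta$-independent initial data).

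Next I would pass to the limit in the weak formulation of~\eqref{eq:delta}, which differs from that of Definition~\ref{def:weak} only in that $\V$ is replaced by $\V_\delta=\V+\delta(\alpha\dx\vp_\delta-D\dx\psi_\delta)$. The contributions of the time derivative, the initial datum, the reaction term, and the non-perturbed drift term $\psi_\delta\,\dx\chi\cdot\V(\psi_\delta,\vp_\delta)$ are handled verbatim as in Step~2 of the stability proof, via the same $\mathcal A$, $\mathcal B$ and $\mathcal C_1,\mathcal C_2$ decomposition, so I would simply invoke that argument. As there, the reaction term forces the weak limit $\bar R$: on $\{\vp>0\}$ the a.e.\ convergence of $\psi_\delta$ gives $\bar R=R$, whereas on $\{\vp=0\}$ the weak limit of $\psi_\delta^2$ is uncontrolled, so only $\bar R$ is identified. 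The second equation passes to the limit because $\vp_\delta\psi_\delta=\vp_\delta^{1/2}\,(\vp_\delta^{1/2}\psi_\delta)$ is a product of two strongly convergent factors, hence $\vp_\delta\psi_\delta\to\vp\psi$.

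The genuinely new step, and the one I expect to be the main (if modest) obstacle, is to show that the extra perturbative drift term
\[
\intt\hspace{-5pt}\intx\delta\,\psi_\delta\,\dx\chi\cdot\big(\alpha\dx\vp_\delta-D\dx\psi_\delta\big)\dxi\dti
\]
vanishes as $\delta\to0$. Here I would split $\delta=\sqrt\delta\cdot\sqrt\delta$ and apply the Cauchy-Schwarz inequality to bound its modulus by
\[
\Big(\delta\intt\hspace{-5pt}\intx\psi_\delta\,|\dx\chi|^2\dxi\dti\Big)^{\frac12}\Big(\delta\intt\hspace{-5pt}\intx\psi_\delta\,|\alpha\dx\vp_\delta-D\dx\psi_\delta|^2\dxi\dti\Big)^{\frac12}.
\]
The second factor is bounded uniformly in $\delta$ precisely by the new estimate~\eqref{est:new}, while the first is $O(\sqrt\delta)$ because $\psi_\delta$ is bounded in $L^\infty((0,T);L^1(\Omega))$ by~\eqref{est:psiL1} and $\dx\chi$ is bounded with compact support; hence the whole term is $O(\sqrt\delta)\to0$. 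Collecting these limits shows that $(\psi,\vp)$ solves~\eqref{eq} in the sense of Definition~\ref{def:weak}, which completes the argument.
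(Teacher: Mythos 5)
Your proposal is correct and follows essentially the same route as the paper: the paper's proof likewise invokes the uniform-in-$\delta$ bounds and compactness of Proposition~\ref{prop3} and then passes to the limit ``similarly to as done in Section~\ref{sec:stability}'', which is exactly the subsequence-extraction and term-by-term limit you describe. Your explicit Cauchy--Schwarz verification that the extra drift term $\delta\,\psi_\delta\,\dx\chi\cdot(\alpha\dx\vp_\delta-D\dx\psi_\delta)$ is $O(\sqrt\delta)$ via~\eqref{est:new} and~\eqref{est:psiL1} is a detail the paper leaves implicit (though it mirrors the computations already used in the proof of Proposition~\ref{prop3}), and your remark that one must impose convergence of the initial data $\psi^0_\delta,\vp^0_\delta$ is a legitimate point the paper's statement also glosses over.
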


\begin{proof}
Taking for granted by~\cite{zhigun2016global2,zhigun2018strongly,zhigun2016global} that there exists a global solution of the non-degenerate problem~\eqref{eq:delta}, under assumptions~\eqref{as:1} and~\eqref{as:2}, we know that it satisfies all properties of Proposition~\ref{prop3}. 
    Then, since all properties are satisfied uniformly in $\delta$, letting $\delta \to 0$ we can pass to the limit along with subsequences similarly to as done in Section~\ref{sec:stability}, and the limit pair $(\psi,\vp)$ satisfies all properties of Propositions~\ref{prop1} and~\ref{prop2}. 
    This shows the global existence of weak solutions of the degenerate problem~\eqref{eq}. 
    \hfill \qed
\end{proof}

\section{Discussion}
\label{sec:discuss}

We focused on a strongly degenerate nonlinear diffusion and haptotaxis model of cancer invasion, implementing the ECM-dependent saturation of the speed of cell motion due to the physical limits of cell migration. The resulting equation for the cell volume fraction dynamics exhibits two regimes of degeneracy of the velocity, which can make it particularly challenging to obtain sufficient regularity results to investigate the well-posedness of the system. We here devised specific regularity estimates, in finite time, that are compatible with these degeneracies. After some regularity results under physical bounds on the initial ECM volume fraction $\vp_0$ (i.e. $0\leq\vp_0\leq 1$), within which both regimes of degeneracy may be encountered, we obtained additional regularity estimates under the stricter bounds $0\leq\vp_0\leq \frac 2 3$, focusing on the vacuum observed at $\vp=0$. 
Based on these estimates and the associated local compactness results obtained, we proved the stability  of -- appropriately defined -- weak solutions with respect to perturbations of the initial data. This confers robustness to the proposed cancer invasion model, despite the strong degeneracy. 
%
\REV{Finally, global existence of weak solutions was proved from an approximate non-degenerate problem, exploiting simple extensions of the preceding regularity results. Uniqueness and large-time behaviour of solutions of the strongly degenerate system are open problems.
}

Indeed, we obtained the strongest results under the stricter bound on the initial ECM volume fraction $0<\vp_0\leq \frac 2 3 $. This is not an unreasonable assumption, given the physical meaning of the variable $\vp$. On one hand, the ECM is a porous medium and in practice its volume fraction will not be close to 1. On the other hand, it is known that ECM is generally also present in the bulk of solid tumours, as captured by the strict positivity of $\vp$. Nevertheless, the extension of the results presented in this work to the most general case with $0\leq\vp^0\leq1$ should still be pursued. This is particularly relevant in view of the fact that here we chose to define $S(\vp)$, i.e. the modulation of the cell motility regulated by the ECM volume fraction, as a simplified version of the one derived in~\cite{giverso2018nucleus}, i.e.
\begin{equation*}\label{def:DM}
    S(\vp) = 4\frac{\left[ (\vp-\vp_{min})(\vp_{th}-\vp) \right]_+}{(\vp_{th}-\vp_{min})^2} \,.
\end{equation*}
Here, $\vp_{min}>0$ is the minimum ECM volume fraction below which cell movement through the ECM is hindered, and $\vp_{th}>\vp_0$ is a threshold ECM volume fraction above which cells can no longer pass through the ECM pores. Definition~\eqref{def:S} consists in the specific case in which $\vp_{min}=0$ and $\vp_{th}=1$, but different values will be more biologically relevant, in which case the degeneracy will occur for intermediate values of $\vp$ \REV{-- see also Figure~\ref{f2}}.
\begin{figure}
    \centering
    \includegraphics[width=\linewidth]{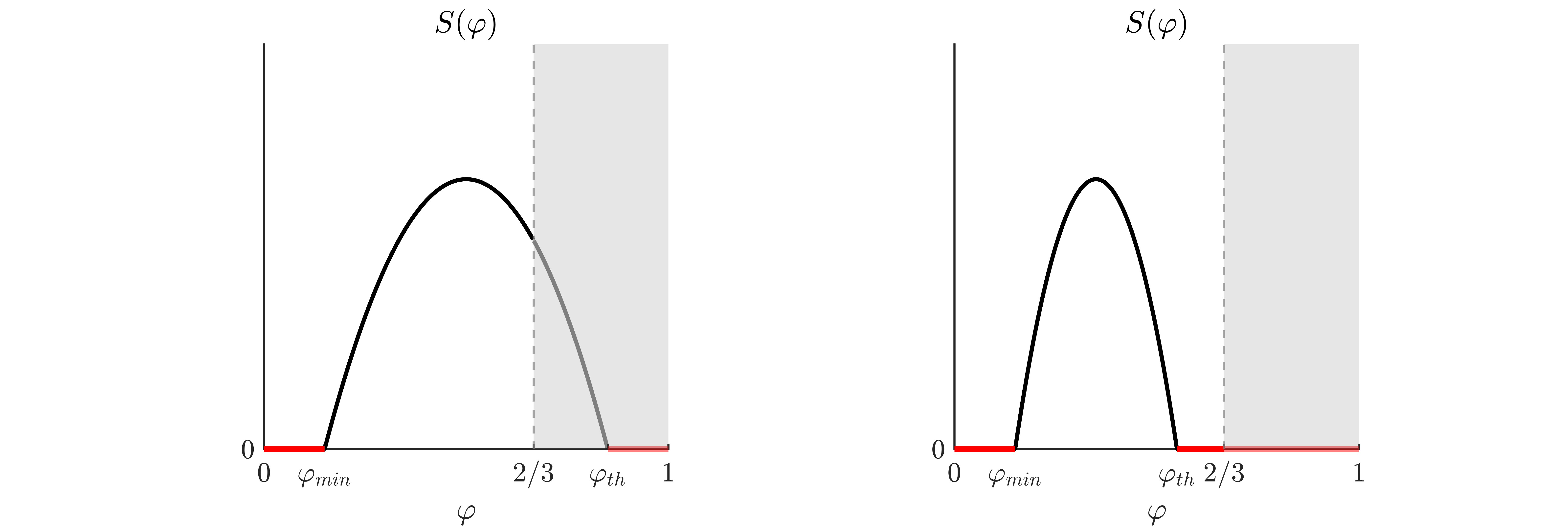}
    \caption{\REV{Alternative behaviours of the function $S(\vp)$ based on definition~\eqref{def:DM} proposed in~\cite{giverso2018nucleus,loy2020modelling}, for $\vp_{th}>\frac{2}{3}$ (left) or $\vp_{th}<\frac{2}{3}$ (right), displaying wider regions of strong degeneracy (in red) of the velocity $\bm V$ defined in~\eqref{def:V}.   }}
    \label{f2}
\end{figure}

\REV{%
Finally, we remark that discretised versions of the estimates obtained in this work will be exploited to design and analyse a robust numerical scheme based on finite volume approximations to simulate the system. The scheme should effectively address the stiffness of the problem by preserving non-negativity, avoiding spurious oscillations, and accurately capturing the sharp gradients of solutions typical of cell migration PDE models~\cite{chertock2008second,kolbe2014numerical,lorenzi2025phenotype}, particularly critical for nonlinear and degenerate parabolic equations~\cite{david2022asymptotic,liu2011high}, meanwhile conserving the qualitative properties of the system in the degenerate regimes~\cite{bessemoulin2012finite2,bessemoulin2012finite}.}

\section*{Acknowledgements}
This project has received funding from the European Union's Horizon 2020 research and innovation programme under the Marie Skłodowska-Curie grant agreement No 945298.  This project has received funding from the Paris Region under the Paris Region fellowship Programme.

\end{document}